\newtheorem{deff}{Definition}
\newtheorem{lem}[deff]{Lemma}
\newtheorem{thm}[deff]{Theorem}
\newtheorem{cor}[deff]{Corollary}
\newtheorem{prob}[deff]{Problem}
\newtheorem{rem}[deff]{Remark}
\newtheorem{mthm}{Main Theorem}
\newcommand\sd{\operatorname{sd}}
\title{Extremal examples of collapsible complexes \\ and random discrete Morse theory}
\author{Karim A. Adiprasito\footnote{Supported by an EPDI postdoctoral fellowship and by the Romanian NASR,
CNCS --- UEFISCDI, project PN-II-ID-PCE-2011-3-0533.}\\%
{\small Hebrew U, Jerusalem, Israel} \\ \texttt{\footnotesize adiprasito@math.huji.ac.il}
  \and
Bruno Benedetti\footnote{Supported by the DFG Coll.\ Research Center TRR 109,
``Discretization in Geometry and Dynamics''
and NSF Grant 1600741, "`Geometric Combinatorics and Discrete Morse Theory}\phantom{i} \\
{\small U Miami, FL, USA} \\ \texttt{\footnotesize bruno@math.miami.edu}
  \and
Frank H.~Lutz\footnote{Supported by the DFG Research Group ``Polyhedral Surfaces'', 
by the DFG Coll.\ Research Center TRR 109 ``Discretization in Geometry and Dynamics'',
by \textsc{VILLUM FONDEN} through the Experimental Mathematics Network 
and by the Danish National Research Foundation (DNRF) through the Centre for Symmetry and Deformation.} \\
{\small TU Berlin, Germany} \\ \texttt{\footnotesize lutz@math.tu-berlin.de}
}
\date{}
\begin{document}

\selectlanguage{english}


\maketitle

\begin{abstract}
We present extremal constructions connected with the property of simplicial collapsibility.

(1) For each $d \ge 2$, there are collapsible (and shellable) simplicial $d$-complexes with only one free face. 
Also, there are non-evasive $d$-complexes with only two free faces. (Both results are optimal in all dimensions.)

(2) Optimal discrete Morse vectors need not be unique. We explicitly construct a contractible, but non-collapsible 
$3$-dimensional  simplicial complex with face vector $f=(106,596,1064,573)$ that admits two distinct optimal 
discrete Morse vectors, $(1,1,1,0)$ and $(1,0,1,1)$. Indeed, we show that in every dimension $d\geq 3$ there are 
contractible, non-collapsible simplicial $d$-complexes that have $(1,0,\dots,0,1,1,0)$ and $(1,0,\dots,0,0,1,1)$ 
as distinct optimal discrete Morse vectors.

(3) We give a first explicit example of a (non-PL) $5$-manifold, with face vector $f=(5013,72300,290944,$ $495912,383136,110880)$, 
that is collapsible but not homeomorphic to a ball.

Furthermore, we discuss possible improvements and drawbacks of random approaches to collapsibility and discrete Morse theory. 
We will introduce randomized versions  \texttt{random-lex-first} and \texttt{random-lex-last} of the \texttt{lex-first} and \texttt{lex-last} 
discrete Morse strategies of \cite{BenedettiLutz2014}, respectively --- and we will see that in many instances 
the \texttt{random-lex-last} strategy works significantly better than Benedetti--Lutz's (uniform) \texttt{random} strategy. 
On the theoretical side, we prove that after repeated barycentric subdivisions, the discrete Morse vectors found by randomized algorithms have, 
on average, an exponential (in the number of barycentric subdivisions) number of critical cells asymptotically almost surely.
\end{abstract}

\vspace{4mm}

\small

\section{Introduction}
Collapsibility was introduced by Whitehead in 1939 \cite{Whitehead1939}, as a ``simpler'' version of the topological notion of contractibility. 
Roughly speaking, collapsible simplicial complexes can be progressively retracted to a single vertex via some sequence of elementary 
combinatorial moves. Each of these moves reduces the size of the complex by deleting exactly two faces. The only requirements are 
that these two faces should be of consecutive dimension, and the larger of the two should be the unique face properly containing 
the smaller one (which is usually called ``\emph{free face}''). 

In dimension one, for example, the free faces of a graph are simply its leaves. Every tree can be reduced to a point by recursively deleting 
one leaf; thus all contractible $1$-complexes are collapsible.  In dimension $d \ge 2$, however, some contractible $d$-complexes have 
no free faces; therefore,  collapsible $d$-complexes are a proper subset of the contractible ones. Contractibility is not algorithmically 
decidable in general, cf.~\cite{Novikov}.  Collapsibility is, though the decision problem is NP-complete if $d \ge 3$~\cite{Tancer2012pre}. 
Relatively fast heuristic approaches with good practical behavior have been described in \cite{BenedettiLutz2014}. 

Here we present a few examples of complexes that are extremal with respect to the collapsibility property. 
The first one shows that one can find collapsible complexes where the beginning of any sequence of deletions is forced.

\begin{mthm}[Theorems~\ref{thm:Sigma} and~\ref{thm:non-evasive}]
For every $d \ge 2$, there are
\begin{compactenum}[\rm (i)]
\item collapsible (and even shellable) simplicial $d$-complexes with only $1$ free face, 
\item and non-evasive simplicial $d$-complexes with only $2$ free faces.
\end{compactenum}
\end{mthm}

\noindent Both results are optimal, compare Lemma \ref{lem:NEbound}. 

\medskip

Whitehead showed that all collapsible PL triangulations of $d$-manifolds are homeomorphic to the $d$-ball. ``PL'' stands 
for piecewise-linear and refers to the technical requirement that the closed star of every face should be itself piecewise-linearly 
homeomorphic to a standard ball; every smooth manifold admits PL triangulations (though in dimension $\ge 5$, it also admits non-PL ones). 
The PL assumption in Whitehead's theorem is really necessary: It is a consequence of Ancel--Guilbault's work, 
cf.\ also~\cite{AdiprasitoBenedetti2011pre}, that \emph{any} contractible manifold admits \emph{some }collapsible triangulation. 

A priori, finding an explicit description of a collapsible triangulation of a manifold different than a ball seems a hard challenge, for three reasons: 
\begin{compactenum}[(1)]
\item the size of (non-PL) triangulations of manifolds with non-trivial topology; 
\item the absence of effective upper bounds on the number of barycentric\footnote{By a result of Adiprasito--Benedetti \cite{AdiprasitoBenedetti2012apre},
         if some subdivision of a complex $C$ is collapsible, then also some \emph{iterated barycentric} subdivision of $C$ is collapsible.} subdivisions 
         needed to achieve collapsibility; 
\item the algorithmic difficulty of deciding collapsibility.
\end{compactenum}

Rather than bypassing these problems, we used a direct approach, and then relied on luck. Specifically, first we tried to realize one triangulation 
of a collapsible manifold different than a ball using as few simplices as possible. Once our efforts resulted in a triangulation with $1\,358\,186$ faces, 
we fed it to the heuristic algorithm \texttt{random-discrete-Morse} from \cite{BenedettiLutz2014}. To our surprise, the algorithm was able to digest 
this complex and show its collapsibility directly --- so that in fact no further barycentric subdivisions were necessary.

\begin{mthm}[Theorem~\ref{thm:non_5_ball}]
There is a simplicial $5$-dimensional manifold with face vector $f=(5013,72300,290944,$ $495912,383136,110880)$ that is collapsible 
but not homeomorphic to the $5$-ball.
\end{mthm}

\medskip

We then turn to an issue left open in Forman's discrete Morse theory \cite{Forman1998, Forman2002}. 
In this extension of Whitehead's theory, in order to progressively deconstruct a complex to a point, 
we are allowed to perform not only deletions of a free face (which leave the homotopy type unchanged), 
but also deletions of the interior of some top-dimensional simplex (which change the homotopy type in a very controlled way). 
The simplices deleted with steps of the second type are called ``\emph{critical cells}''.

Forman's approach works with complexes of arbitrary topology, but to detect such topology, one must keep track of the critical cells 
progressively deleted and of the way they were attached. A~coarse way to do this is to store in a vector the numbers $c_i$ of critical $i$-faces. 
The resulting vector $\mathbf{c}=(c_0, \ldots, c_d)$ is called \emph{a discrete Morse vector} of the complex. It depends on the homotopy type 
of the complex, and also on the particular sequence of deletions chosen. The discrete Morse vector does not determine the homotopy type 
of the complex (basically because it forgets about attaching maps). However, it yields useful information; for example, it provides 
upper bounds for the Betti numbers, or for the rank of the fundamental group.

The ``mission'' is thus to find sequences of deletions that keep the vector $\mathbf{c}=(c_0, \ldots, c_d)$ as small as possible. 
An \emph{optimal discrete Morse vector} is one that minimizes the number $|\mathbf{c}|= c_0 + \ldots + c_d$ of critical faces. 
At this point, one issue was left unclear, namely, whether the set of discrete Morse vectors admits a componentwise minimum, 
or if instead some complex might have more than one optimal discrete Morse vector. We answer this dilemma as follows. 

\begin{mthm}[Theorems~\ref{thm:two_3d} and~\ref{thm:two}]
For every $d\geq 3$ there is a contractible but non-collapsible $d$-dimensional simplicial complex that has 
two distinct optimal discrete Morse vectors $(1,0,\dots,0,1,1,0)$ and $(1,0,\dots,0,0,1,1)$ with three critical cells each.
\end{mthm}

\subsubsection*{Random models in discrete Morse theory and homology computations}
Discrete Morse theory for simplicial (or cubical) complexes is the basis for fast (co-)reduction techniques that are used 
in modern homology and persistent homology packages such as CHomP~\cite{Chomp}, RedHom~\cite{RedHom} and Perseus~\cite{Perseus}.
The main aim in these packages is to first reduce the size of some given complex before eventually setting up boundary matrices 
for (expensive) Smith normal form homology computations.

The problem of finding optimal discrete Morse functions is {\cal NP}-hard~\cite{JoswigPfetsch2006,LewinerLopesTavares2003a}. 
However, in practice one can find optimal (or close to optimal) discrete Morse vectors even for huge inputs~\cite{BenedettiLutz2014} --- and it is, 
on the contrary, non-trivial to construct explicit complicated triangulations of small size of spaces with trivial topology (such as balls or spheres) 
on which the search for good discrete Morse vectors produces poor results; see, for example, the constructions 
in~\cite{BenedettiLutz2014,Lutz2004b,LutzTsuruga2013ext}.

In an effort to measure how complicated a given triangulation is, a randomized approach to discrete Morse theory was introduced 
in \cite{BenedettiLutz2014}. In this approach, a given complex is deconstructed level-wise, working from the top dimension downwards. 
The output is a discrete Morse vector $(c_0, c_1, c_2, \ldots, c_d)$. The algorithm has two important features:

\begin{compactitem}
\item A  $k$-face is declared critical only if there are no free $(k-1)$-faces available, or in other words, only if we cannot go further 
         with collapsing steps. This tends to keep the number of critical faces to a minimum, and to speed up the algorithm. 
\item The decision of which free $(k-1)$-face should be collapsed away, or (if none) of which $k$-face should be removed as critical, i
         s performed uniformly at random. Randomness allows for a fair analysis of the triangulation, and leaves the door open 
         for relaunching the program multiple times, thus obtaining a whole spectrum of outputs, called  \emph{experienced discrete Morse spectrum}.
\end{compactitem}

The collection of discrete Morse vectors that could a priori be reached this way is called  \emph{discrete Morse spectrum}; see Definition~\ref{def:DMS}.

It was observed in \cite{BenedettiLutz2014} that the boundary spheres of many simplicial polytopes, even with a large number of vertices,
have an experienced (i.e., after, say, 10000 runs of the program) discrete Morse spectrum consisting of the sole discrete Morse vector $(1,0,\dots,0,1)$. 
In the case of ``complicated'' triangulations it even might happen that their barycentric subdivisions become ``easier'' in the sense that 
on average fewer critical cells are picked up in a random search (as we experienced for the examples \texttt{trefoil}, \texttt{double\_trefoil}, 
and \texttt{triple\_trefoil} in~\cite{BenedettiLutz2014}).
This, however, is the exception: 

\begin{mthm}[Theorem~\ref{thm:exp}]
Let $K$ be any simplicial complex of dimension $d \ge 3$. Then the random discrete Morse algorithm, applied to the $\ell$-th barycentric subdivision
$\mathrm{sd}^{\ell} K$, yields an expected number of\, $\Omega(e^{\ell})$ critical cells a.a.s.
\end{mthm}

It is known that simplicial polytopal $d$-spheres of dimension $d\geq 3$ can have a non-trivial discrete Morse spectrum 
\cite{BenedettiLutz2009pre,BenedettiLutz2014,KCrowleyEbinKahnReyfmanWhiteXue2003pre}.
Theorem~\ref{thm:exp} of Section~\ref{sec:asymptotic}  shows that the average discrete Morse vector
for the discrete Morse spectrum of a high barycentric subdivision of any simplicial $d$-complex, $d\geq 3$, 
becomes arbitrarily large; in particular,  for (polytopal) barycentric subdivisions of simplicial polytopal spheres.

Finally, the Appendix is devoted to a comparison of the \texttt{random-discrete-Morse} strategy introduced in \cite{BenedettiLutz2014}
and two new variations of it, the \texttt{random-lex-first} and the \texttt{random-lex-last} strategies,
which are based on the deterministic \texttt{lex-first} and \texttt{lex-last} strategies of \cite{BenedettiLutz2014}, respectively.
(The \texttt{lex-last} strategy was called ``rev\_lex'' in \cite{BenedettiLutz2014}. 
Here we use \texttt{lex-last} --- along with  \texttt{lex-first} --- to avoid confusion with (the term) reverse lexicographic ordering, which is different.)
In the \texttt{random-lex-first} and the \texttt{random-lex-last} strategies, we first randomly relabel the vertices of some given complex
and then perform all choices of free faces and critical faces by picking at every stage the lexicographically smallest and lexicographically 
largest faces as in the deterministic \texttt{lex-first} and \texttt{lex-last} strategies of \cite{BenedettiLutz2014}, respectively. 
The experiments we include in the Appendix indicate a surprising superior efficiency of \texttt{random-lex-last}.

\section{Collapsible complexes with fewest free faces}

For the definition of collapsible, nonevasive, and shellable, we refer the reader to \cite{BenedettiLutz2013}. 
Recall that for contractible complexes, shellable implies collapsible. A triangulation $B$ of a ball 
is called \emph{endocollapsible} if $B$ minus some facet (or equivalently, minus any facet)
collapses onto the boundary $\partial B$ \cite{BVB}. Any collapsible complex has at least one free face, 
namely, the one at which the sequence of elementary collapses starts. It is easy to see that every nonevasive complex 
always has at least two free faces (Lemma \ref{lem:NEbound}). The aim of this section is to construct collapsible 
resp.\ nonevasive complexes with exactly one resp.\ exactly two free faces. 
First we need to explain a geometric construction that is essentially known. 

\begin{deff}[Antiprism Subdivision] Let $P$ be an arbitrary polytope. The \emph{antiprism subdivision} $\mathfrak{A}(P)$ of $P$ is obtained as follows:
\begin{compactitem}
\item first we place in the interior of $P$ a constricted copy of $P^*$, the polar dual of $P$;
\item then we take the join of each face $\sigma$ of $P$ with the corresponding face $\sigma^*$ of $P^*$.
\end{compactitem}
\end{deff}

\begin{figure}[tb]
\begin{center}
\includegraphics[width=3.5cm]{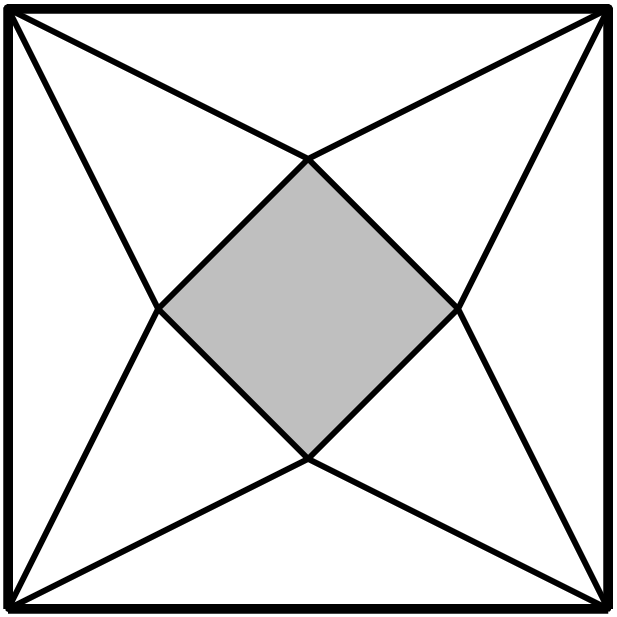}
\end{center}
\caption{The antiprism subdivision of a square.}
\label{fig:antiprism}
\end{figure}

This $\mathfrak{A}(P)$ is a polytopal complex, whose facets are in bijection with the proper faces of $P$ 
plus the facet $P^*$ in the interior; see Figure~\ref{fig:antiprism} for the antiprism subdivision of a square.
If~$P$ is not a simplex, $\mathfrak{A}(P)$ is not a simplicial complex. The next lemma shows how to canonically subdivide $\mathfrak{A}(P)$ 
to a simplicial complex that is always shellable.  A (pure) $d$-dimensional polytopal complex is \emph{regular} if it can be realized 
as the lower convex hull of a $(d+1)$-dimensional polytope. 
For the definition of  a \emph{pulling triangulation} see e.~g.\ \cite[Chapter 4.3.2]{LRS}.

\begin{lem}\label{lem:Antiprism}
If $P$ is a simplicial polytope, then there is a triangulation of $\mathfrak{A}(P)$ that is regular, and in particular shellable and endocollapsible.
\end{lem}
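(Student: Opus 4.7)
The plan is to produce the triangulation in three stages: first realize $\mathfrak{A}(P)$ as a \emph{regular} polytopal subdivision of $P$, then refine it to a simplicial complex by a pulling triangulation (which preserves regularity), and finally invoke the standard chain of implications ``regular triangulation of a ball $\Rightarrow$ shellable $\Rightarrow$ endocollapsible.''

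For the first stage, assume $P \subset \mathbb{R}^d$ contains the origin in its interior, and place inside $P$ a scaled copy $\varepsilon P^*$ of the polar dual. For $\varepsilon > 0$ sufficiently small, the cells $\sigma * (\varepsilon \sigma^*)$ (one for each proper face $\sigma$ of $P$, paired with its polar dual face $\sigma^*$) are honest geometric joins (the affine hulls of $\sigma$ and $\varepsilon\sigma^*$ are in general position), they intersect pairwise in lower-dimensional common faces, and together with the interior facet $\varepsilon P^*$ they tile $P$. To see that this polytopal subdivision is regular, exhibit an explicit convex lifting: assign height $0$ to every vertex of $P$ and height $-1$ (or any negative value) to every vertex of $\varepsilon P^*$, then take the lower envelope. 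By construction, each cell of the subdivision projects down from a facet of the lower envelope, so the subdivision is regular.

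For the second stage, apply a pulling triangulation with respect to an arbitrary ordering of the vertices of $\mathfrak{A}(P)$. It is standard (see \cite[Ch.~4.3.2]{LRS}) that the pulling triangulation of a regular polytopal complex is itself regular: one can refine the convex lifting from the first stage by a sufficiently generic perturbation ordered by the pulling sequence. Since $P$ is simplicial, the only non-simplex cells in $\mathfrak{A}(P)$ are the ones involving the faces of $\varepsilon P^*$ of positive dimension, and the pulling triangulation resolves these uniformly and coherently.

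For the third stage, the resulting triangulation $T$ of $P$ is a regular simplicial triangulation of a $d$-ball. Every regular triangulation of a ball admits a line shelling (Bruggesser--Mani), so $T$ is shellable. Finally, shellable balls are endocollapsible, as shown in~\cite{BVB}. The main obstacle is the first stage: verifying that the cells $\sigma * (\varepsilon \sigma^*)$ really do form a polytopal subdivision (rather than merely a combinatorial description) and that a single convex lifting witnesses regularity. Once this geometric picture is secured, the remaining steps are routine applications of cited results.
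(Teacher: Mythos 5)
Your proposal is correct and follows essentially the same route as the paper: realize $\mathfrak{A}(P)$ as the lower envelope of a lifting that separates $P$ and the shrunken copy of $P^*$ by height, refine to a simplicial complex by pulling vertices of $P^*$ while preserving regularity, and conclude via regular $\Rightarrow$ shellable $\Rightarrow$ endocollapsible. The only cosmetic difference is that the paper pulls the vertices of $P^*$ one at a time and perturbs the lift after each pull, whereas you invoke the standard fact that pulling refinements of regular subdivisions are regular; these amount to the same argument.
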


\begin{proof} 
If $P$ is $d$-dimensional, we can place $\mathfrak{A}(P)$ in a $d$-dimensional hyperplane of ${\mathbb R}^{d+1}$
and then lift $P^*$ to a parallel hyperplane. This way, we have realized $\mathfrak{A}(P)$ as the lower convex hull
of a $(d+1)$-polytope, i.e., $\mathfrak{A}(P)$ is regular. Next, we successively triangulate the non-simplicial faces
of $\mathfrak{A}(P)$ by picking vertices (one at a time) of $P^*$.
Let $v$ be such a vertex. We apply a pulling triangulation on the current subdivision of  $\mathfrak{A}(P)$
with respect to $v$. In the resulting subdivision of $\mathfrak{A}(P)$ in ${\mathbb R}^{d+1}$, $v$ lies in simplicial faces only, 
and we thus can slightly push $v$ ``outside'' to again obtain a regular subdivision of $\mathfrak{A}(P)$. 
We then proceed with further vertices of $P^*$ until we eventually obtain a simplicial and regular subdivision of $\mathfrak{A}(P)$.

Finally, every regular triangulation is shellable \cite{BM}, and every shellable triangulation is endocollapsible~\cite{BVB}.
\end{proof}

With Lemma~\ref{lem:Antiprism} we are now ready to construct a series of collapsible complexes with one free face only. 
The main idea is to start from the cross-polytope, stack all of its facets but one, triangulate its interior using a regular triangulation 
of the antiprism subdivision, and finally perform identifications on the boundary until ``only one facet is left''
--- thus maintaining the contractibility of the complex, and so the collapsibility. 

\begin{thm}\label{thm:Sigma}
For every $d\geq 2$ there is a collapsible shellable $d$-dimensional simplicial complex $\Sigma_d$ with $2^d+d+1$ vertices
that has only one free face.
\end{thm}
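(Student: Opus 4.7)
My plan is to follow the informal recipe sketched immediately before the theorem. Let $C_d$ denote the $d$-dimensional cross-polytope with vertex set $\{\pm e_1,\ldots,\pm e_d\}$ and the $2^d$ boundary facets indexed by sign vectors $\varepsilon\in\{\pm\}^d$. First, I would apply Lemma~\ref{lem:Antiprism} to $C_d$ to produce a simplicial regular---hence shellable, hence endocollapsible---triangulation $T$ of $C_d$, which introduces the $2^d$ vertices of the dual cube $C_d^*$ in the interior (a total of $2d+2^d$ vertices). Second, fix one boundary facet $F_0\subset\partial C_d$ and, for every other boundary facet, perform a combination of stacking (gluing an external cone) and boundary identifications so that (a) every boundary $(d-1)$-face other than $F_0$ ends up in two $d$-simplices, (b) no free face of lower codimension is created, and (c) the final vertex count is $2^d+d+1$. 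Call the resulting complex $\Sigma_d$.

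For shellability, I would extend the shelling of $T$ from Lemma~\ref{lem:Antiprism} by appending the Stage~2 facets in an order compatible with the identifications, with the cone over $F_0$ attached last; stacking steps extend any shelling trivially, and by design the pairwise identifications let each new facet be attached along a pure $(d-1)$-dimensional initial segment of its boundary. Collapsibility then follows formally: $\Sigma_d$ is contractible because $T$ is a $d$-ball and Stage~2 only attaches cones and identifies boundary data, so shellability plus contractibility gives collapsibility.

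For the unique-free-face claim, note that every interior $(d-1)$-face of $T$ already lies in two cofacets; every boundary $(d-1)$-face other than $F_0$ acquires a second cofacet during Stage~2, either through its stacking cone or through its identification partner; and a downward induction on codimension, using the identification data, shows that no free face of lower dimension survives either. Hence $F_0$ is the unique free face, and the vertex count is a direct bookkeeping exercise.

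The hard part lies entirely in Stage~2: one has to arrange the pairwise identifications of the $2^d-1$ non-distinguished boundary facets so that simultaneously (i) the complex remains simplicial (no simplex contains a repeated vertex), (ii) contractibility is preserved, (iii) every free face other than $F_0$ is eliminated in every codimension, and (iv) the vertex count hits exactly $2^d+d+1$. The antipodal involution on the sign-vector-indexed facets and the hyperoctahedral symmetry of the interior triangulation $T$ give a clean natural pairing scheme for these facets, but verifying conditions (i)--(iv) in tandem---especially the codimension induction in (iii)---is the combinatorial heart of the argument.
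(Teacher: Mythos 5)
Your overall strategy matches the paper's: antiprism-subdivide the cross-polytope via Lemma~\ref{lem:Antiprism}, stack the non-distinguished boundary facets, and identify boundary data so that only one facet of $\partial \mathrm{C}^{\Delta}_d$ remains free. But the step you yourself flag as ``the combinatorial heart of the argument'' --- specifying the identification scheme and verifying simpliciality, the free-face count, the vertex count, and shellability --- is exactly the content of the proof, and it is left open. Worse, the scheme you gesture at (a \emph{pairwise} matching of the $2^d-1$ remaining facets, suggested by the antipodal involution) is not what works: the paper identifies \emph{all} $2^d-1$ facets $\{n_1,\dots,n_k,n_1',\dots,n_{d-k}'\}$ with the single facet $\{1',\dots,d'\}$ via $n_i\mapsto n_i'$. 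This collapses the $2d$ cross-polytope vertices to $d$ and, crucially, identifies all the stacking apices to a single new vertex $d+1$, which is how the count $2^d+d+1$ is achieved; with a pairwise matching and separate cone apices you would not hit that number, and you give no argument that your conditions (i)--(iv) can be satisfied simultaneously. (One simplification you miss: since $\Sigma_d$ is pure $d$-dimensional, a free face is automatically a $(d-1)$-face, so your ``downward induction on codimension'' is unnecessary.)

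The shellability argument also does not go through as stated. You propose to ``extend the shelling of $T$ by appending the Stage~2 facets,'' but Stage~2 is an \emph{identification} (a quotient), not an attachment of new facets, and shellability is not inherited by quotients in any routine way. The paper instead realizes the regular triangulation $T$ as the lower hull of a $(d+1)$-polytope $P^{d+1}$, takes a line shelling of $\partial P^{d+1}$ that begins at the single upper facet and ends at the facet containing $\{1,\dots,d\}$, reverses it, locally modifies it to account for the stackings, and only then checks that this particular ordering descends to a shelling of the identified complex $\Sigma_d$ (ending with the cone over the subdivided facet $\{1',\dots,d'\}$). Without that specific ordering and the verification that it survives the identification, the claims of shellability and hence collapsibility are unsupported.
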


\begin{proof}
The proof consists of three parts: (i) we construct a contractible $d$-dimensional CW complex $S_d$ with only one free 
$(d-1)$-dimensional cell; (ii) we subdivide $S_d$ appropriately to obtain a simplicial complex $\Sigma_d$ 
with $2^d+d+1$ vertices and only one free face; and
(iii) we show via Lemma \ref{lem:Antiprism} that $\Sigma_d$ is collapsible and even shellable.

\begin{compactenum}[(i)]
\item Let $\mathrm{C}^{\Delta}_d$ denote the $d$-dimensional regular cross-polytope centered at the origin. 
Let us label the vertices of $\mathrm{C}^{\Delta}_d$ by $\{1,2,\dots,d,1',2',\dots,d'\}$ such that the antipodal map $x\mapsto -x$ 
maps each vertex $i$ to $i'$ and $i'$ to $i$, respectively. 
If $\{n_1,\dots,n_k,n_1',\dots, n_{d-k}'\}$ is a boundary facet of $\mathrm{C}^{\Delta}_d$ \emph{different} from $\{1,2,\dots,d\}$, 
we identify it with the facet $\{1',2',\dots,d'\}$ by mapping each $n_i$ to $n_i'$. 
This way, the $(d-1)$-dimensional face $\{1,2,\dots,d\}$ is the only boundary facet of $\mathrm{C}^{\Delta}_d$ 
that is not identified with another boundary facet of $\mathrm{C}^{\Delta}_d$.
We call the resulting cell complex $S_d$.

\item Let us go back to the cross polytope $\mathrm{C}^{\Delta}_d$ before the identification and subdivide the interior of $\mathrm{C}^{\Delta}_d$ 
according to an antiprism triangulation $T$. Formally, we place a $d$-dimensional cube $C_d$ with $2^d$ vertices, labeled by $\{d+2,\dots,2^d+d+1\}$,
in the interior of $\mathrm{C}^{\Delta}_d$, such that the cubical $(d-1)$-faces of the cube $C_d$ correspond to the vertices of $\mathrm{C}^{\Delta}_d$. 
The interior cube is then triangulated without adding further vertices, as explained in Lemma \ref{lem:Antiprism}, to achieve a 
regular triangulation $T$ of $\mathfrak{A}(\mathrm{C}^{\Delta}_d)$.

\begin{figure}[tb]
\begin{center}
\begin{postscript}
\psfrag{1}{1}
\psfrag{2}{2}
\psfrag{1'}{1'}
\psfrag{2'}{2'}
\psfrag{3}{3}
\psfrag{4}{4}
\psfrag{5}{5}
\psfrag{6}{6}
\psfrag{7}{7}
\includegraphics[width=5.5cm]{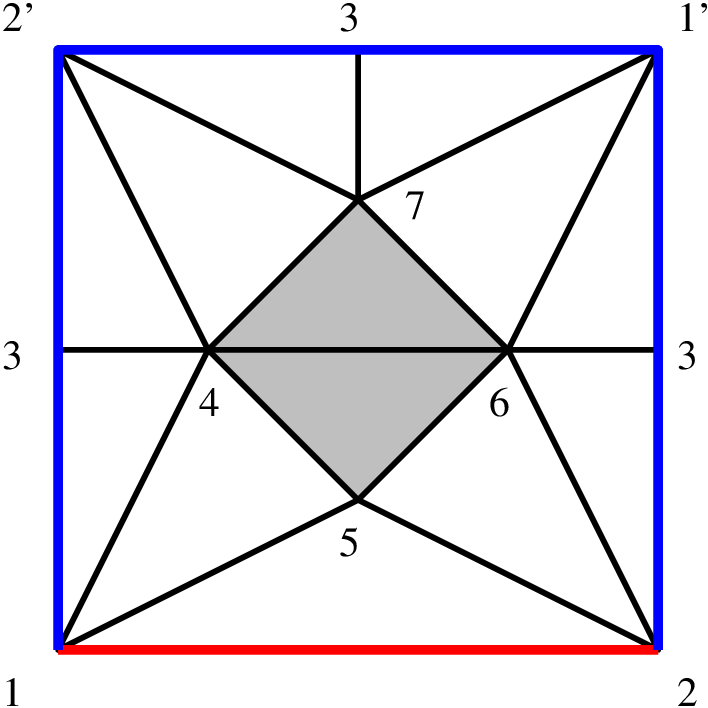}
\end{postscript}
\end{center}
\caption{The shellable complex $\Sigma_2$.}
\label{fig:sigma_2}
\end{figure}

\begin{figure}[tb]
\begin{center}
\begin{postscript}
\psfrag{1}{1}
\psfrag{2}{2}
\psfrag{3}{3}
\psfrag{4}{4}
\includegraphics[width=8cm]{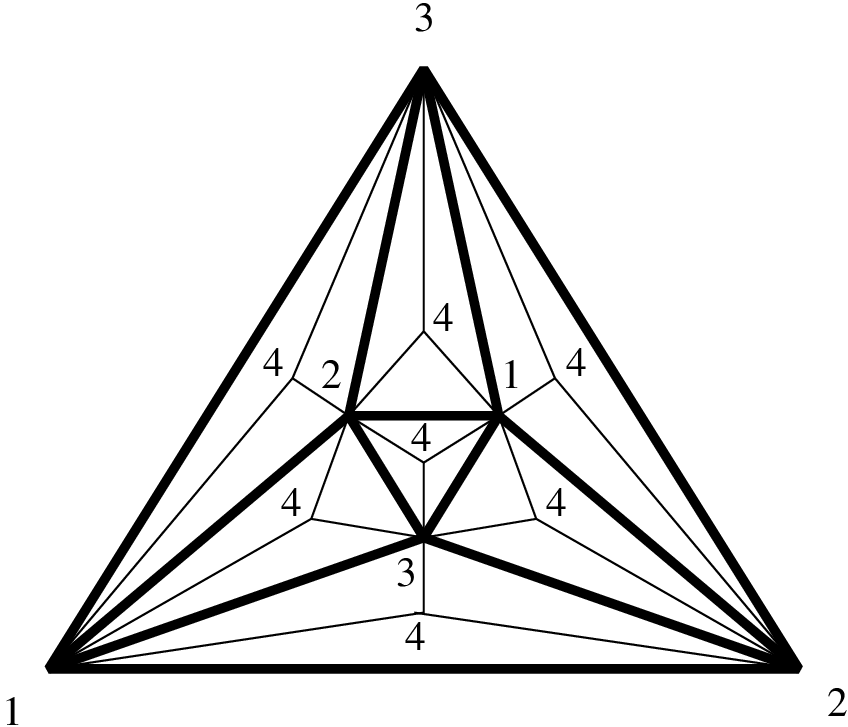}
\end{postscript}
\end{center}
\caption{The identified boundary of $\Sigma_3$.}
\label{fig:identified_stacked}
\end{figure}

Next, let us stack all the boundary facets of $\mathrm{C}^{\Delta}_d$ different from $\{1,2,\dots,d\}$. Under the identification 
of the (stacked) boundary facets, the boundary of $\mathrm{C}^{\Delta}_d$ gets mapped to the simplex $\{1,2,\dots,d\}$ union 
a subdivision of it --- thus to the boundary of a $d$-simplex with an extra vertex $d+1$ used for the stacking; see Figure~\ref{fig:sigma_2} 
for the complex $\Sigma_2$ with the free edge in red and the identified stacked boundary of $\mathrm{C}^{\Delta}_2$ in blue. 
Figure~\ref{fig:identified_stacked} displays (a ``Schlegel diagram'' of) the identified stacked boundary of $\mathrm{C}^{\Delta}_3$,
where the back triangle $\{1,2,3\}$ is not stacked.

We denote the resulting complex under the boundary identifications as explained in item (i)  by $\Sigma_d$. Unlike $S_d$, 
this $\Sigma_d$ is a simplicial complex. Further, we let $T'$ be the simplicial complex \emph{before} the boundary identifications, 
that is, $T'$ is obtained from the regular triangulation $T$ of $\mathfrak{A}(\mathrm{C}^{\Delta}_d)$ by stacking its boundary facets 
(the boundary facets of the outer $\mathrm{C}^{\Delta}_d$) different from $\{1,2,\dots,d\}$.

\item We embed the regular triangulation $T$ of $\mathfrak{A}(\mathrm{C}^{\Delta}_d)$ in ${\mathbb R}^{d+1}$ 
as the lower convex hull of a polytope $P^{d+1}$ such that the boundary facets of $T$ all lie in a $d$-dimensional hyperplane 
of ${\mathbb R}^{d+1}$ and bound $\mathrm{C}^{\Delta}_d$ as the single upper facet of $P^{d+1}$.
We shell the boundary complex of the polytope $P^{d+1}$ by using a line shelling that begins with the single upper facet $\mathrm{C}^{\Delta}_d$
and ends with the facet that contains (the not subdivided copy of) $\{1,2,\dots,d\}$ on the lower hull. 
In the reverse order, this shelling sequence deletes the boundary facets of $P^{d+1}$ one by one 
--- and induces a collapsing sequence for $T$ to its boundary minus $\{1,2,\dots,d\}$.

We can use this (reverse) shelling sequence to also induce a shelling sequence for $\Sigma_d$,  
where, before passing to $\Sigma_d$, we modify the sequence for $T$ locally to extend it to its subdivision~$T'$.
Since $T'$ is obtained from $T$ by stacking its boundary facets (except $\{1,2,\dots,d\}$), we merely 
have to replace those facets of $T$ in the shelling sequence that contain boundary facets different 
from $\{1,2,\dots,d\}$ by the $d$ resulting facets under the stackings, respectively.
In particular, we can shell $\Sigma_d$ in reverse order till we reach the cone over the subdivided facet $\{1',2',\dots,d'\}$,
which is shellable.
\qedhere
\end{compactenum}
\end{proof}

By construction, the complexes $\Sigma_d$ have $(d-1)$-dimensional faces that are contained in more than two facets.
In particular, the examples $\Sigma_d$ are not manifolds. For a shellable $3$-dimensional ball with only one ear see~\cite{Lutz_ear}.
None of the complexes $\Sigma_d$ is non-evasive, as the next lemma shows.

\begin{lem} \label{lem:NEbound}
A non-evasive $d$-complex, $d \ge 1$, has at least two free faces.  
\end{lem}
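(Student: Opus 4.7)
My plan is to argue by induction on the number of vertices $|V(K)|$, exploiting the recursive definition of non-evasiveness. For the base case $|V(K)|=2$: any non-evasive complex of dimension at least one is contractible and therefore connected, so $K$ must be the unique edge on two vertices, both of which are free faces.

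For the inductive step I pick a dominated vertex $v$, so by definition $L := \operatorname{link}_K(v)$ and $D := K - v$ are both non-evasive; since $K$ is connected with at least one edge, $L$ is nonempty. The key technical ingredient I would prove first is the following \emph{lifting observation}: if $\sigma$ is a free face of $L$ with unique proper coface $\tau$ in $L$, then $\{v\}\cup\sigma$ is a free face of $K$ with unique proper coface $\{v\}\cup\tau$. This holds because every proper coface of $\{v\}\cup\sigma$ in $K$ must contain $v$, hence be of the form $\{v\}\cup\rho$ with $\rho\in L$ and $\rho\supsetneq\sigma$; freeness of $\sigma$ in $L$ then forces $\rho=\tau$.

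I then split on $\dim L$. If $\dim L\ge 1$, then $L$ is itself a non-evasive complex of dimension at least one on strictly fewer vertices than $K$, so the inductive hypothesis furnishes two distinct free faces $\sigma_1,\sigma_2$ of $L$; the lifting observation then yields two distinct free faces $\{v\}\cup\sigma_1,\{v\}\cup\sigma_2$ of $K$. If $\dim L=0$, then $L$ is a single vertex $\{w\}$, making $v$ itself a free face of $K$ with unique coface $\{v,w\}$. If also $\dim D=0$, then $K$ is the edge $\{v,w\}$ and we are done; otherwise induction applied to $D$ gives two free faces of $D$, at least one of which, call it $\sigma$, differs from $w$. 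A short verification shows that $\sigma$ remains free in $K$: any new coface of $\sigma$ in $K$ would have to contain $v$, which would force $\sigma\subseteq\{w\}$, impossible for $\sigma\ne w,\emptyset$. Combined with $v$, this gives two free faces of $K$.

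The main obstacle I anticipate is precisely this last piece of bookkeeping: one must confirm that re-attaching the star $v*L$ to $D$ does not introduce unwanted cofaces either for the free face $\sigma$ or for its unique $D$-coface $\tau$, so that freeness is actually inherited from $D$ to $K$. The case analysis is short, made tractable by $L=\{w\}$ being tiny, but it is the step that requires the most careful writing.
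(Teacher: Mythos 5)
Your proof is correct, and its engine is the same as the paper's: use the recursive definition of non-evasiveness to pick a vertex $v$ whose link $L$ and deletion $D$ are non-evasive, obtain two free faces of $L$ by induction, and lift them to free faces of the whole complex by joining with $v$ (your ``lifting observation'' is exactly the implicit step in the paper). The differences are the induction parameter (you induct on the number of vertices, the paper on the dimension $d$) and, more substantively, your separate treatment of the case $\dim L = 0$. The paper's one-line inductive step simply asserts that the link of $v$ has two free faces, which tacitly assumes $\dim L \ge 1$; if the chosen vertex happens to be a pendant vertex (link a single point --- which is perfectly compatible with non-evasiveness), the link has no free faces at all and the paper's argument as literally written does not apply. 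Your fallback in that case --- note that $v$ itself is a free vertex, and import a second free face $\sigma \ne \{w\}$ from the non-evasive deletion $D$, checking that attaching the star of $v$ creates no new cofaces of $\sigma$ --- closes that gap cleanly. So what your version buys is completeness in the degenerate case; what the paper's buys is brevity, at the cost of an unstated hypothesis on the link of the chosen vertex.
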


\begin{proof}
We proceed by induction on the dimension $d$. For $d =1$ the claim is equivalent to the well-known fact that every tree has at least two leaves. 
If $d \ge 2$, fix a non-evasive $d$-complex $C$ and a vertex $v$ of $C$ whose link and deletion are both non-evasive. By induction, 
the link of $v$ has two free faces $\sigma$ and $\tau$, which implies that $v \ast \sigma$ and $v \ast \tau$ are two free faces of $C$.
\end{proof}

As a consequence of Theorem \ref{thm:Sigma}, we show now that the bound given by Lemma \ref{lem:NEbound} is sharp in all dimensions:

\begin{figure}[tb]
\begin{center}
\begin{postscript}
\psfrag{v}{$v$}
\psfrag{w}{$w$}
\psfrag{sds2}{$\sd \Sigma_2$}
\psfrag{sds3}{$\sd \Sigma_3$}
\includegraphics[width=11cm]{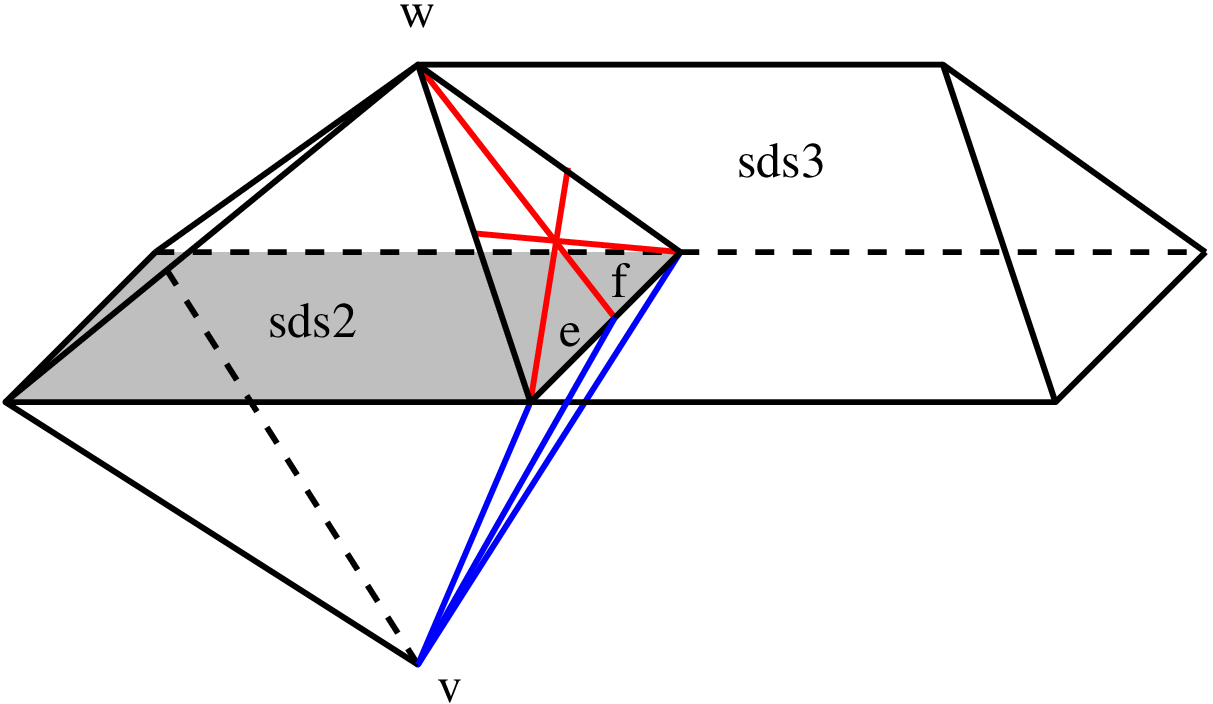}
\end{postscript}
\end{center}
\caption{The non-evasive complex $E_3$ with two free faces, $v \ast e$ and $v \ast f$.}
\label{fig:non-evasive_two_free}
\end{figure}

\begin{thm} \label{thm:non-evasive}
For every $d \ge 1$, there is a non-evasive $d$-complex $E_d$ with exactly two free faces, which share a codimension-one face. 
\end{thm}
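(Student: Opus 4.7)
The plan is induction on $d$, anchored on the shellable complex $\Sigma_d$ from Theorem~\ref{thm:Sigma}. For $d=1$, I would take $E_1$ to be a path on three vertices: every tree is non-evasive, and its two leaves are the only free $0$-faces, sharing the empty face as their common codimension-one subface.

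For $d = 2$, I would set $E_2 := \sd \Sigma_2$. Since $\Sigma_2$ is shellable by Theorem~\ref{thm:Sigma}, the subdivision $\sd \Sigma_2$ is vertex-decomposable (a standard result on barycentric subdivisions of shellable complexes), and hence non-evasive. I would then argue by a direct count that a $(d-1)$-face $[\tau_0 \subset \cdots \subset \tau_{d-1}]$ of $\sd X$ is free if and only if $\dim \tau_i = i$ for each $i$ and $\tau_{d-1}$ is a free face of $X$: the total number of extensions of the chain by one element equals $(2^{\dim \tau_0 + 1} - 2) + \sum_{i=1}^{d-1} (2^{\dim \tau_i - \dim \tau_{i-1}} - 2) + \#\{\rho \in X : \rho \supsetneq \tau_{d-1}\}$, which equals $1$ exactly in the stated case. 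Applied to $\Sigma_2$ with its unique free edge $\sigma = \{u_1, u_2\}$, this yields the two free edges $[u_1 \subset \sigma], [u_2 \subset \sigma]$ of $\sd \Sigma_2$, which share the vertex $[\sigma]$.

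For $d \geq 3$, the plan is to construct $E_d$ inductively by coning over $E_{d-1}$ and capping the excess free faces. Concretely, I would introduce a new vertex $v$ and set $E_d := (v \ast E_{d-1}) \cup_{E_{d-1}} Z_d$, for a non-evasive $d$-complex $Z_d \supset E_{d-1}$ arranged so that every facet of $E_{d-1}$ lies in some $d$-face of $Z_d$ and no $(d-1)$-face of $Z_d$ outside $E_{d-1}$ is free. Then the free $(d-1)$-faces of $E_d$ will be exactly $v \ast \sigma_1$ and $v \ast \sigma_2$, where $\sigma_1, \sigma_2$ are the two free $(d-2)$-faces of $E_{d-1}$ given by induction: cones $v \ast \tau$ are free iff $\tau$ is free in $E_{d-1}$; each facet of $E_{d-1}$ now lies in two facets of $E_d$ (one via the cone, one via $Z_d$); and interior $(d-1)$-faces of $Z_d$ are not free by construction. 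The two remaining free faces share $v \ast (\sigma_1 \cap \sigma_2)$, of codimension one in each. Non-evasiveness of $E_d$ follows by shedding $v$ first: the link is $E_{d-1}$ (non-evasive by induction) and the deletion is $Z_d$ (non-evasive by construction).

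The hard part will be the construction of $Z_d$. I plan to produce it by a cone-and-fill procedure in the spirit of the proof of Theorem~\ref{thm:Sigma}: begin with a second cone $w \ast E_{d-1}$, then iteratively cap newly exposed free faces by further cones over non-evasive subcomplexes (subdivided skeleta of $E_{d-1}$), preserving non-evasiveness at each step. The figure for $E_3$ suggests that in dimension three, $Z_3$ can be realized using $\sd \Sigma_3$ with $E_2 = \sd \Sigma_2$ embedded along a natural copy sitting inside $\sd \Sigma_3$. The most delicate point is verifying that this iteration terminates with no stray boundary $(d-1)$-faces left free outside $E_{d-1}$; this is where the detailed combinatorics of the antiprism-based construction of $\Sigma_d$ must be brought to bear.
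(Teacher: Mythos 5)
Your low-dimensional cases and the overall architecture of the inductive step coincide with the paper's: $E_1$ is a path, $E_2=\sd\Sigma_2$, and for $d\ge 3$ one cones over $E_{d-1}$ with a vertex $v$ (producing the two desired free faces $v\ast\sigma_1$, $v\ast\sigma_2$) and glues on a non-evasive complex along $E_{d-1}$ to kill all other free faces, verifying non-evasiveness by shedding $v$ first. Your explicit criterion for freeness in a barycentric subdivision (a $(d-1)$-face of $\sd X$ is free iff it is a full flag $\tau_0\subset\cdots\subset\tau_{d-1}$ with $\dim\tau_i=i$ and $\tau_{d-1}$ free in $X$) is a correct and welcome elaboration of what the paper only asserts with a citation to Welker.

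The genuine gap is exactly where you flag it: the capping complex $Z_d$ is never constructed, and the ``cone-and-fill, iteratively cap newly exposed free faces'' procedure you sketch is open-ended --- each new cone creates new free faces, and you give no mechanism forcing the process to terminate. The paper closes this gap in one shot, and the key idea is worth internalizing. Take the \emph{second} cone $w\ast E_{d-1}$ (so the intermediate object is the suspension of $E_{d-1}$, which is non-evasive); its only extra free $(d-1)$-faces are $w\ast\sigma_1$ and $w\ast\sigma_2$, which together form a single $(d-1)$-disk $T=w\ast(\sigma_1\cup\sigma_2)$. Stellarly subdivide (preserving non-evasiveness) so that this disk carries the combinatorics of $\sd T\cong\sd(\Delta^{d-1})$, without touching $\sigma_1,\sigma_2$ themselves. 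Now glue on $\sd\Sigma_d$ --- which is non-evasive and, by your own freeness criterion, has as its free faces \emph{precisely} the $d!$ facets of $\sd\sigma$, where $\sigma$ is the unique free face of $\Sigma_d$ --- by identifying $\sd\sigma$ with $\sd T$. This simultaneously kills the $d!$ free faces on the suspension side and the $d!$ free faces of $\sd\Sigma_d$, leaving only $v\ast\sigma_1$ and $v\ast\sigma_2$; non-evasiveness follows by deleting $v$ (link $E_{d-1}$), then the interior vertices of the copy of $E_{d-1}$ under $w$ (collapsing $w\ast E_{d-1}$ onto $\sd T$), leaving $\sd\Sigma_d$. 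So your guess that $\sd\Sigma_3$ enters the picture is right, but it is attached along the subdivision of its unique free face to the subdivided disk $w\ast\sigma_1\cup w\ast\sigma_2$, not by embedding $E_2$ inside $\sd\Sigma_3$; in your notation, $Z_d$ is the (subdivided) cone $w\ast E_{d-1}$ glued to $\sd\Sigma_d$ along $\sd T\equiv\sd\sigma$, which is essentially the paper's explicit formula $\overline{E}_d=\sd(w\ast\Sigma_{d-1})\cup(v\ast\sd\Sigma_{d-1})\cup\sd\Sigma_d$.
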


\begin{proof}
The claim is obvious for $d=1$ (a path has two endpoints) and easy for $d=2$ (it suffices to consider the barycentric subdivision 
of the shellable $2$-complex $\Sigma_2$ with only one free face). In higher dimensions, one has to be more careful. 
We explain how to deal with dimension $3$, leaving it to the reader to extend our construction by induction on the dimension. 
Let us start with a non-evasive $2$-complex $E_2$ with two adjacent free edges, $e$ and $f$, say; 
the barycentric subdivision, $E_2=\sd \Sigma_2$, of $\Sigma_2$ yields such an example \cite{Welker1999}. 
The suspension \[S=\operatorname{susp} E_2 = (v \ast E_2) \cup (w \ast E_2)\] is also non-evasive, but it has four free triangles, 
pairwise adjacent. All we have to do is to ``kill the freeness'' of the triangles $w \ast e$ and $w \ast f$. Before proceeding with this, 
observe that $w \ast e$ and $w \ast f$ together can be thought of as one larger triangle $T=w\ast (e \cup f)$ stellarly subdivided into two. 
Since any barycentric subdivision can be obtained by a sequence of stellar subdivisions, there is a way to stellarly subdivide $S$ 
into a triangulation $S'$ that restricted to $(w \ast e) \cup (w \ast f)$ is combinatorially equivalent to the barycentric subdivision of $T$. 
Stellar subdivisions maintain non-evasiveness; so $S'$ is a non-evasive $3$-complex with $2 + 3!$ free triangles. 

Now, let $\Sigma_3$ be the collapsible $3$-complex with only one free face $\sigma$ constructed in Theorem \ref{thm:Sigma}. 
Let $\sd \Sigma_3$ be its barycentric subdivision. Let us glue $\sd \Sigma_3$ to $S'$ by identifying the subdivision 
of  $(w \ast e) \cup (w\ast f)$ with $\sd \sigma$. Let $E_3$ be the resulting complex; see Figure~\ref{fig:non-evasive_two_free}. 
$E_3$ has now only two free faces, $v \ast e$ and $v \ast f$. 

To show that $E_3$ is non-evasive, we have to give an order of the vertices of $E_3$ such that in every step the link and the deletion 
of the current vertex are non-evasive. Now, barycentric subdivisions of collapsible complexes and cones are non-evasive \cite{Welker1999}.
We start by deleting $v$. The link of $v$ is $E_2=\sd \Sigma_2$ and therefore non-evasive.
For our deletion sequence we proceed with the interior vertices of $E_2$ (in any order)
to collapse the subdivision of $w\ast E_2$ towards the subdivision of $(w \ast e) \cup (w \ast f)$. 
This leaves us with $\sd \Sigma_3$, which is non-evasive.
\end{proof}

\begin{rem}  \rm
A direct construction, though with more vertices, of non-evasive $d$-complexes $\overline{E}_d$ that have exactly two free faces 
is given by
$$\overline{E}_d=\sd(w\ast \Sigma_{d-1})\cup (v\ast \sd \Sigma_{d-1})\cup \sd \Sigma_d.$$
Here, the boundary of $\sd(w\ast \Sigma_{d-1})$ is $\partial(\sd(w\ast \Sigma_{d-1}))=\sd \Sigma_{d-1}\cup \sd T$,
to which we glue $v\ast \sd \Sigma_{d-1}$ (from ``below'' in Figure~\ref{fig:non-evasive_two_free}) 
and $\sd \Sigma_d$ (from ``the right'' in Figure~\ref{fig:non-evasive_two_free}).
\end{rem}

\section{Complexes with two different optimal Morse vectors} 
\label{sec:different_optima}

Here, we address the question of whether a simplicial complex must have a unique optimal discrete Morse vector. 
The answer is  negative in general, as we shall now see. 
We construct an explicit $3$-dimensional simplicial complex \texttt{two\_optima} with 106 vertices, which has two distinct optimal discrete Morse vectors 
$(1,1,1,0)$ and $(1,0,1,1)$. The construction of  \texttt{two\_optima} can be generalized to every dimension $d\geq 3$ to yield 
(up to suitable subdivisions) $d$-dimensional complexes with exactly two optimal discrete Morse vectors $(1,0,\dots,0,1,1,0)$ and $(1,0,\dots,0,0,1,1)$.
We also obtain that not all optimal discrete Morse vectors need to be contained in the discrete Morse spectrum of a complex.

The starting points for the construction of  \texttt{two\_optima} are the following observations: 
\begin{compactenum}[(i)]
\item for all discrete Morse vectors of a given simplicial complex $C$, the alternating sum $\sum_{i=0}^d(-1)^i c_i$ is always constant, 
         and equal to the Euler characteristic of $C$;
\item if $(c_0,c_1,\dots,c_d)$ is a discrete Morse vector for $C$, and $c_0$ is larger than the number of connected components of $C$, 
         then $(c_0 - 1,c_1 -1,\dots,c_d)$ is also a discrete Morse vector for $C$. (In particular, if we look for optimal discrete Morse vectors 
         of a connected complex, we can always assume $c_0=1$.)
\end{compactenum}
It follows that if we aim at producing a simplicial complex that has distinct optimal discrete Morse vectors, then $(1,1,1,0)$ and $(1,0,1,1)$ 
would be the smallest such vectors with respect to dimension and the total number of critical cells. By the discrete Morse inequalities
and the fact that any complex with discrete Morse vector $(1,c_1,c_2,c_3)$ is homotopy equivalent to a complex with the respective
number of cells in each dimension, any complex that admits both vectors $(1,1,1,0)$ and $(1,0,1,1)$  has to be contractible.

In the following, we indeed will construct such a complex \texttt{two\_optima}  that is contractible and has $(1,1,1,0)$ and $(1,0,1,1)$
as optimal discrete Morse vectors, where the main work in the construction will be to block the trivial vector $(1,0,0,0)$
to occur as discrete Morse vector of the complex.

\medskip
\textbf{Preliminary Example.} From the previous section we know that there is a $3$-complex $\Sigma_3$ that has only one free triangle $t$. 
Also, there is a $2$-complex $\Sigma_2$ with only one free edge $e$, which belongs to a triangle $t'$, say.  Let $C$ be the (non-pure) 
$3$-complex obtained by gluing together a copy of $\Sigma_2$ and a copy of $\Sigma_3$, via the identification $t \equiv t'$ (in some order of the vertices). 
Let us look for small discrete Morse vectors for $C$.
\begin{compactitem}
\item The vector $(1,0,1,1)$ can be achieved. In fact, since $\Sigma_3$ is endocollapsible, 
after removing a tetrahedron from $C$ we can collapse away all the tetrahedra of $C$; 
what we are left with is $\Sigma_2$ plus an additional $2$-dimensional membrane,
which, as the identified boundary of $\Sigma_3$, is the stellar subdivision of a triangle. 
Once we remove a critical triangle from this membrane, 
the resulting $2$-complex collapses to $\Sigma_2$, which is collapsible. 
Note that in this sequence, $e$ and $t'$ are removed together, in an elementary collapse. 
(Alternatively, we could have removed $t'$ as a critical triangle to obtain $\Sigma_2$ 
with the triangle~$t'$ stellarly subdivided, which is collapsible again, also with the then free edge $e$ paired up.)
\item Instead of removing a critical tetrahedron from $C$ as a first step, we could have started with an elementary collapse
         that uses the unique free triangle $t'$, and then empty out $\Sigma_3$ so that we obtain
         $\Sigma_2$ with the triangle~$t'$ subdivided, which is collapsible by starting with the free edge $e$.
         Thus, $(1,0,0,0)$ is achievable as a discrete Morse vector (and obviously smallest possible).
         Our aim will be to block the edge $e$ from being free and thus ruling out $(1,0,0,0)$ as possible discrete Morse vector --- with the hope 
         that both of the vectors $(1,1,1,0)$ and $(1,0,1,1)$  will become the new optima.
\end{compactitem}

\begin{thm}\label{thm:two_3d}
There is a contractible, but non-collapsible $3$-dimensional simplicial complex \linebreak
\texttt{two\_optima}  with face vector $f=(106,596,1064,573)$
that has two distinct optimal discrete Morse vectors, $(1,1,1,0)$ and $(1,0,1,1)$.
\end{thm}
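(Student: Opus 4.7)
The plan is to modify the preliminary example $C=\Sigma_2\cup_{t\equiv t'}\Sigma_3$ described just above the theorem so as to surgically block the discrete Morse vector $(1,0,0,0)$ while preserving the achievability of $(1,0,1,1)$ and forcing $(1,1,1,0)$ as a second optimum. Recall that in $C$ both $(1,0,1,1)$ and the unwanted $(1,0,0,0)$ are achievable: $(1,0,0,0)$ arises because, after the unique free triangle $t$ of $\Sigma_3$ is used to initiate the collapse of $\Sigma_3$, the resulting 2-complex is $\Sigma_2$ with the triangle $t'$ stellarly subdivided, and the free edge $e\subset t'$ of $\Sigma_2$ then drives a full collapse. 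So the only thing to kill is the freeness of $e$ after $\Sigma_3$ has been emptied.

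First, I would attach to $\Sigma_2$ a small ``blocker'' gadget $B$ along the edge $e$ (identifying $B$ with $\Sigma_2$ only along $e$ and its two endpoints), chosen so that (a)~$B$ is itself collapsible, (b)~the resulting 2-complex $\Sigma_2\cup_e B$ is still contractible with a unique free edge $e_B$ \emph{away} from $e$, and (c)~after the subdivision of $t'$ coming from the stacked boundary of $\Sigma_3$ is glued on, the edge $e$ has degree $\geq 2$ in the 2-dimensional part, hence is \emph{not} free. A natural choice is to take $B$ to be another copy of $\Sigma_2$ attached along $e$, so the situation is symmetric and still contractible; the face count $f=(106,596,1064,573)$ given in the statement is a bookkeeping consequence of this explicit gluing with the $\Sigma_2$, $\Sigma_3$ of Theorem~\ref{thm:Sigma}. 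Call the resulting complex \texttt{two\_optima}.

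Next I would verify the three required properties. Contractibility is immediate: \texttt{two\_optima} is obtained by gluing the contractible complexes $\Sigma_3$, $\Sigma_2$, and $B$ along contractible subcomplexes (a triangle and an edge). For the Morse vectors: $(1,0,1,1)$ is achieved exactly as in the preliminary example, using the endocollapsibility of $\Sigma_3$ after removing one critical tetrahedron, then spending one critical triangle on the membrane, then collapsing the remaining 2-complex via $e_B$ and through $\Sigma_2$. The vector $(1,1,1,0)$ is achieved by first using the free triangle $t$ to collapse $\Sigma_3$ down to the subdivided membrane, then declaring a critical edge (the blocker edge that prevents $e$ from being free), after which the contractible 2-complex collapses via $e_B$. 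Finally, to rule out $(1,0,0,0)$: the unique free $(d{-}1)$-face of \texttt{two\_optima} is the triangle $t$, so any collapsing sequence must begin by removing $t$ through $\Sigma_3$; once $\Sigma_3$ has been evacuated, the remaining 2-complex has $e$ no longer free by construction, so at least one critical face of dimension $\leq 2$ is needed — which together with the argument that the alternating sum must equal $\chi=1$ forces $|\mathbf c|\geq 3$. Non-collapsibility follows.

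The main obstacle is the third verification: one must be sure that \emph{every} initial sequence of collapses, and not just the ``obvious'' one starting from $t$, leaves us in a configuration where some critical face is unavoidable. Concretely, I would argue that the only free face of \texttt{two\_optima} is $t$ (the free edges $e_B$ of $B$ and the free edge of $\Sigma_2$ on the other side of the blocker are not free in the ambient 3-complex because the attached membrane and the stacked cone of $\Sigma_3$ cover them by triangles), so the collapse sequence is forced to start at $t$; once $\Sigma_3$ is emptied one must check that no cascade in the 2-skeleton re-exposes a free triangle in $\Sigma_3$ (it cannot, since $\Sigma_3$ is gone). Because all the ``hiding'' has to be done simultaneously — $e$ non-free after $\Sigma_3$ empties, while $t$ remains the only free triangle in the full 3-complex — the construction needs careful calibration of the gadget $B$, which is precisely where the specific $f$-vector comes from.
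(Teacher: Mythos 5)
Your overall strategy---take the preliminary example $\Sigma_2\cup\Sigma_3$ and block the freeness of the edge $e$ so that $(1,0,0,0)$ becomes unreachable---is the right starting point, and your reduction of optimality to non-collapsibility (via the Euler-characteristic parity argument) is correct. But the core of your non-collapsibility argument has a genuine gap. You argue: ``once $\Sigma_3$ has been evacuated, the remaining $2$-complex has $e$ no longer free by construction, so at least one critical face is needed.'' This tacitly assumes that every collapsing sequence first empties the $3$-dimensional body and only then works in the $2$-skeleton. Collapsing sequences are under no such obligation: while tetrahedra are being removed, interior triangles become free and can be collapsed away together with edges, so one can ``drill tunnels'' through the $3$-dimensional part, perforate the membrane (the identified boundary of $\Sigma_3$) from inside, and thereby free $e$ before the body is fully evacuated. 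The paper runs into exactly this failure mode: its intermediate complex $\Sigma_2\cup\Sigma'_3$, in which $e$ is even identified with an \emph{interior} segment of the membrane (a stronger blocking than your edge-gluing gadget $B$), is nevertheless still collapsible with vector $(1,0,0,0)$, as the authors verified by computer. Your check ``no cascade in the $2$-skeleton re-exposes a free triangle in $\Sigma_3$ (it cannot, since $\Sigma_3$ is gone)'' is circular for the same reason: the cascade happens \emph{while} $\Sigma_3$ is being emptied, not after.

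The paper's actual fix is the step your construction lacks: after identifying $e$ with a segment of the membrane, it glues a fresh copy of $\Sigma_3$ (each entered only through its unique free triangle) on top of \emph{every} triangle of the membrane. Collapsing any such copy replaces its base triangle by yet another $2$-dimensional membrane attached along that triangle's boundary, and a topological argument (Lemma~\ref{lem:homot_disk} in the general-dimensional version) shows that none of these nested membranes can be perforated, and hence $e$ can never be freed, until some triangle is declared critical. Two further points: your blocker $B$ is itself collapsible with a free edge $e_B$, so it can be collapsed away and re-expose $e$, which is why an edge-gluing gadget does not robustly block anything; and your derivation of $(1,1,1,0)$ via ``one critical edge, then collapse'' actually describes $(1,1,0,0)$, which is impossible for a contractible complex ($1-1+0-0\ne 1$) --- the forced critical triangle on the membrane must come first, and it in turn forces the critical edge. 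The specific $f$-vector $(106,596,1064,573)$ is the face count of the paper's explicit construction ($25$ vertices for $\Sigma_2\cup\Sigma'_3$ plus $9\times 9$ for the nine shielding copies of $\Sigma_3$) and would not be reproduced by your gadget.
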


\begin{proof}
The construction of  \texttt{two\_optima} involves a copy of $\Sigma_2$ (with relabeled vertices), 
a modified copy $\Sigma'_3$ of $\Sigma_3$ and nine further copies of $\Sigma_3$. 
The proof outline is as follows:
\begin{compactitem}[--]
\item In \textbf{Part I} we introduce the complex  $\Sigma_2 \cup \Sigma'_3$. The transition from $\Sigma_3$ to $\Sigma'_3$ consists in a subdivision followed by an identification of the edge $e$ with a segment of the identified boundary of $\Sigma_3$. As a result, $\Sigma_2 \cup \Sigma'_3$ is still contractible.
\item In \textbf{Part II} we study the small discrete Morse vectors of $\Sigma_2 \cup \Sigma'_3$. It turns out that the complex admits  $(1,0,1,1)$, $(1,1,1,0)$, 
but still $(1,0,0,0)$. However, the sequences of collapses yielding the third vector are now ``rare'', and all of a specific type. 
\item In \textbf{Part III} we show how  attaching nine further copies of $\Sigma_3$ to $\Sigma_2 \cup \Sigma'_3$ excludes collapsibility and yields the desired complex.
\end{compactitem}

\paragraph*{Part I.}
Let us start with a notational issue: We shall relabel the vertices of $\Sigma_2$ of Figure~\ref{fig:sigma_2} such that the unique free edge $1\,2$ 
is contained in the triangle $1\,2\,3$; see Figure~\ref{fig:sigma_2_relabeled}. To the complex $\Sigma_2 \cup \Sigma'_3$ we are going to construct,
(the relabeled copy of)  $\Sigma_2$ will thus contribute the 12 triangles
{\small
\[
\begin{array}{l@{\hspace{3.5mm}}l@{\hspace{3.5mm}}l@{\hspace{3.5mm}}l@{\hspace{3.5mm}}l@{\hspace{3.5mm}}l@{\hspace{3.5mm}}l@{\hspace{3.5mm}}l@{\hspace{3.5mm}}l@{\hspace{3.5mm}}l}

1\,3\,23,  &  1\,22\,23,  &  1\,22\,24,  &  1\,22\,25,  &  1\,24\,25,  & 2\,3\,25, \\
2\,22\,23,  & 2\,22\,24,  & 2\,22\,25,  & 2\,23\,24,  &  3\,23\,25,  & 23\,24\,25

\end{array}
\]
}%
as facets. On top of the 13th triangle $1\,2\,3$ (in grey) we glue the modified copy $\Sigma'_3$ of $\Sigma_3$.

\begin{figure}[tb]
\begin{center}
\begin{postscript}
\psfrag{1}{1}
\psfrag{2}{2}
\psfrag{3}{3}
\psfrag{22}{22}
\psfrag{23}{23}
\psfrag{24}{24}
\psfrag{25}{25}
\includegraphics[width=5.5cm]{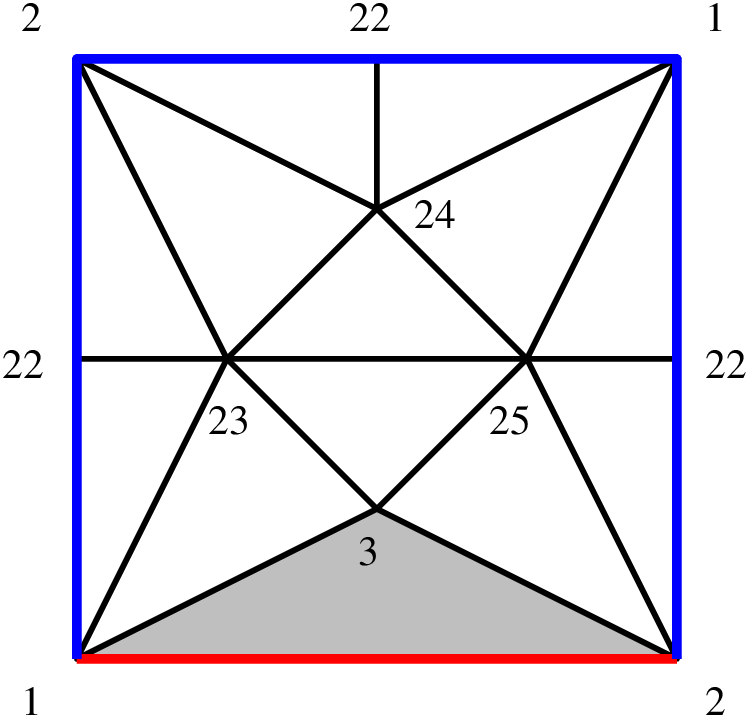}
\end{postscript}
\end{center}
\caption{The collapsible complex $\Sigma_2$ with new vertex labels.}
\label{fig:sigma_2_relabeled}
\end{figure}

\begin{figure}[tb]
\begin{center}
\begin{postscript}
\psfrag{1}{1}
\psfrag{2}{2}
\psfrag{3}{3}
\psfrag{4}{4}
\includegraphics[width=8cm]{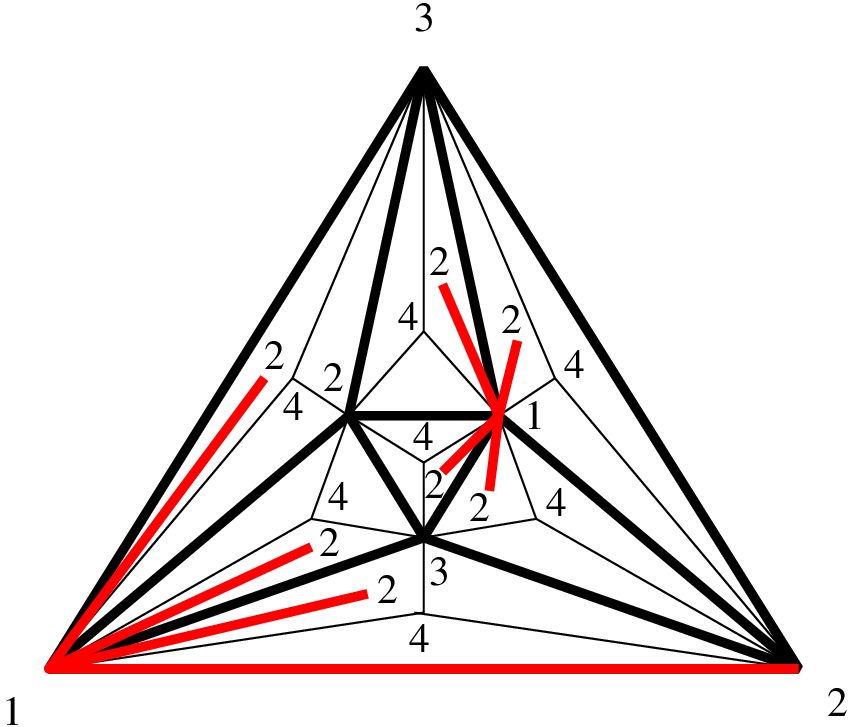}
\end{postscript}
\end{center}
\caption{The modified identified boundary of $\Sigma'_3$.}
\label{fig:identified_stacked_subdivided}
\end{figure}

In order to obtain $\Sigma'_3$ from $\Sigma_3$, we again start as in Figure~\ref{fig:identified_stacked} 
with the boundary of the octahedron (the $3$-dimensional crosspolytope), but this time with a finer subdivision 
and an additional identification on the (identified) boundary. Figure~\ref{fig:identified_stacked_subdivided}
displays this extra identification, where we glue the edge $1\,2$ to a segment $1\,2$  (in red) within (each of the seven duplicates of) 
the triangle $1\,3\,4$ (on the identified boundary of $\Sigma_3$ of Figure~\ref{fig:identified_stacked}). 
Since the edge $1\,2$ and its image intersect in the single  vertex 1,  $\Sigma'_3$ (after triangulating its $3$-dimensional ``interior'')
and $\Sigma_3$ are homotopy equivalent. To realize $\Sigma'_3$ as an explicit $3$-dimensional simplicial complex,
we first subdivide the triangle $1\,3\,4$ inside the stacked triangle $1\,2\,3$ to host the segment $1\,2$ as a proper edge;
see Figure~\ref{fig:subdivided_triangle}.

\begin{figure}[tb]
\begin{center}
\begin{postscript}
\psfrag{1}{1}
\psfrag{2}{2}
\psfrag{3}{3}
\psfrag{4}{4}
\psfrag{5}{5}
\psfrag{6}{6}
\includegraphics[width=9.75cm]{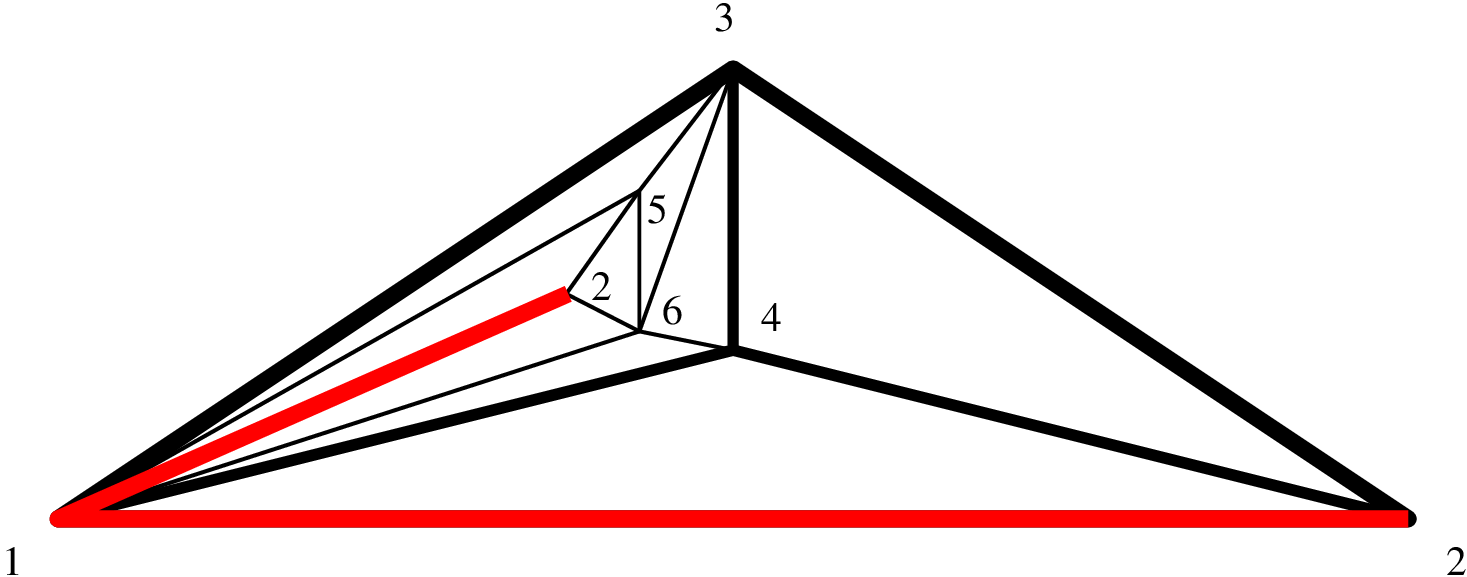}
\end{postscript}
\end{center}
\caption{The subdivision of the triangle $1\,3\,4$ within the triangle $1\,2\,3$.}
\label{fig:subdivided_triangle}
\end{figure}

We then glue the complex $\Sigma'_3$ on top of $\Sigma_2$ along the back-side triangle $1\,2\,3$  of $\Sigma'_3$.
We call $1\,2\,3$ the bottom triangle of $\Sigma'_3$, whereas on the top side of $\Sigma'_3$ we see
Figure~\ref{fig:identified_stacked_subdivided}, with each triangle $1\,3\,4$ subdivided as shown in Figure~\ref{fig:subdivided_triangle}.
The $3$-dimensional solid body between the top side and the back triangle can be thought of 
as a $3$-dimensional ball with identifications on the boundary. For triangulating this body,
we first shield off the extra seven copies of the vertex $2$ inside (the seven copies of) the triangle $1\,3\,4$
by taking seven cones 
{\small
\[
\begin{array}{l@{\hspace{3.5mm}}l@{\hspace{3.5mm}}l@{\hspace{3.5mm}}l@{\hspace{3.5mm}}l@{\hspace{3.5mm}}l@{\hspace{3.5mm}}l@{\hspace{3.5mm}}l@{\hspace{3.5mm}}l@{\hspace{3.5mm}}l@{\hspace{3.5mm}}l}

1\,2\,5\,x,   &  1\,2\,6\,x,   & 1\,3\,5\,x,  &  1\,4\,6\,x,   &   2\,5\,6\,x,  &  3\,4\,6\,x,  &  3\,5\,6\,x,

\end{array}
\]
}%
with respect to the apices $x=1,\dots,7$. Below these seven cones, we place a next layer of cones
{\small
\[
\begin{array}{l@{\hspace{3.5mm}}l@{\hspace{3.5mm}}l@{\hspace{3.5mm}}l@{\hspace{3.5mm}}l@{\hspace{3.5mm}}l@{\hspace{3.5mm}}l@{\hspace{3.5mm}}l@{\hspace{3.5mm}}l}

1\,3\,x\,y,  &
1\,4\,x\,y,  &
3\,4\,x\,y,  &

1\,2\,4\,x, &
2\,3\,4\,y,  &

\end{array}
\]
}%
with apices $y=x+7$ (so that the vertices $x=1,\dots,7$ are shielded from other copies of the vertex~2).

\begin{figure}[tb]
\begin{center}
\begin{postscript}
\psfrag{1}{1}
\psfrag{2}{2}
\psfrag{3}{3}
\psfrag{14}{14}
\psfrag{15}{15}
\psfrag{16}{16}
\psfrag{17}{17}
\psfrag{18}{18}
\psfrag{19}{19}
\psfrag{20}{20}
\includegraphics[width=7.25cm]{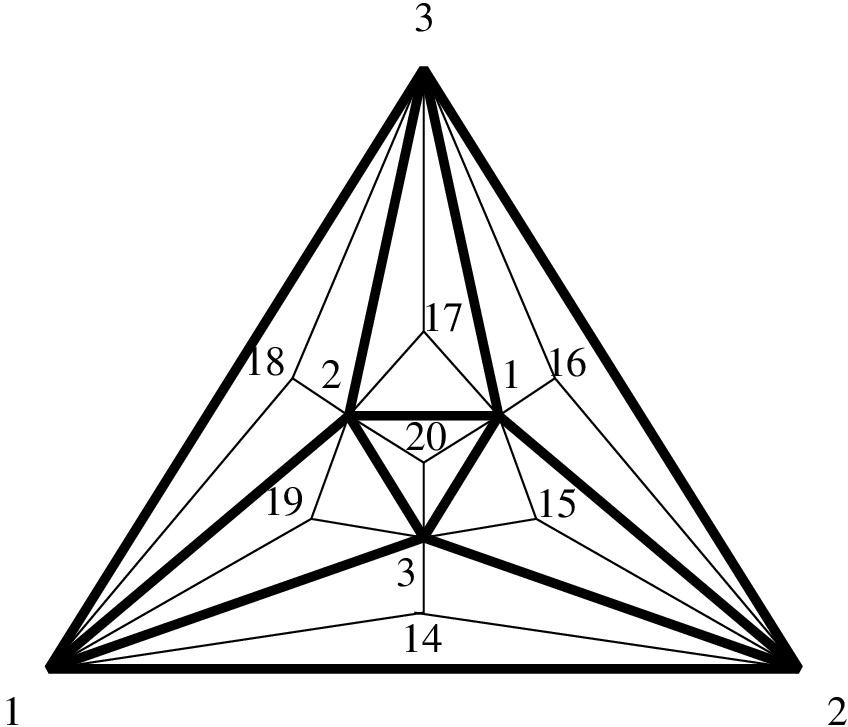}\hspace{5mm}\includegraphics[width=7.25cm]{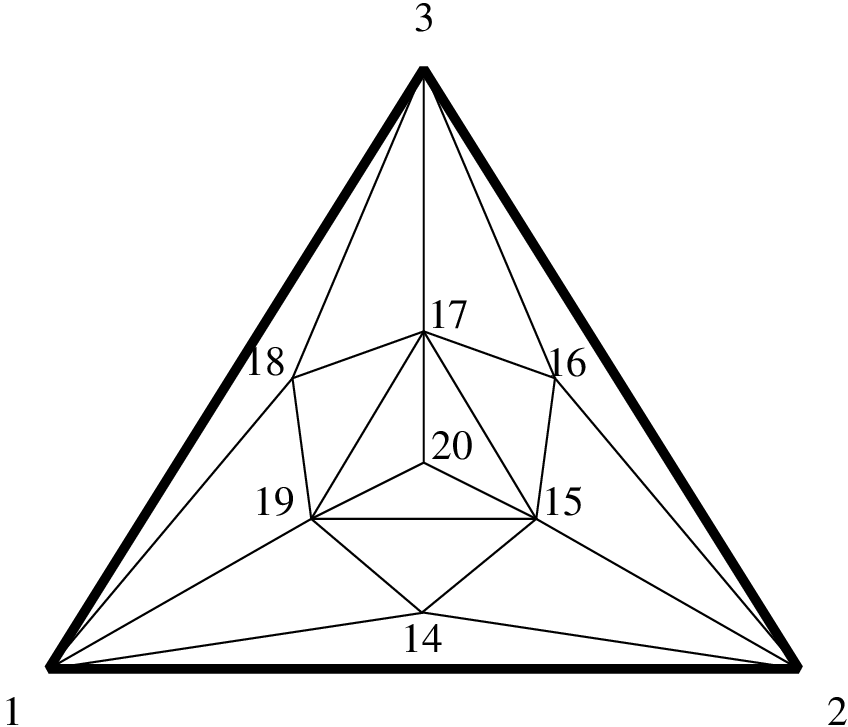}
\end{postscript}
\end{center}
\caption{Projection of the lower hull of the seven two-fold cones over the identified and subdivided boundary of the octahedron (left) 
              and one layer further below (right).}
\label{fig:identified_stacked_shielded}
\end{figure}

The seven two-fold cones can be assigned freely to the seven copies of the subdivided triangle $1\,2\,3$; 
our assignment is displayed in Figure~\ref{fig:identified_stacked_shielded} (left). We connect the cones
(and, en passant, shield off the multiple copies of the edges $1\,2$, $1\,3$ and $1\,4$ via the tetrahedra
{\small
\[
\begin{array}{l@{\hspace{3.5mm}}l@{\hspace{3.5mm}}l@{\hspace{3.5mm}}l@{\hspace{3.5mm}}l@{\hspace{3.5mm}}l}

2\,3\,14\,15,  &  1\,2\,15\,16,  &  1\,3\,16\,17,  &  2\,3\,17\,18,  &  1\,2\,18\,19,  &  1\,3\,14\,19,

\end{array}
\]
}%
and
{\small
\[
\begin{array}{l@{\hspace{3.5mm}}l@{\hspace{3.5mm}}l}

1\,3\,15\,20, &  1\,2\,17\,20,  &  2\,3\,19\,20.

\end{array}
\]
}%
Next, we shield off the central copies of the vertices $1$, $2$ and $3$
by gluing in the tetrahedra
{\small
\[
\begin{array}{l@{\hspace{3.5mm}}l@{\hspace{3.5mm}}l@{\hspace{3.5mm}}l@{\hspace{3.5mm}}l@{\hspace{3.5mm}}l@{\hspace{3.5mm}}l}

1\,15\,16\,17,  &
1\,15\,17\,20,  &

2\,17\,18\,19,  &
2\,17\,19\,20,  &

3\,14\,15\,19,  &
3\,15\,19\,20,  &

15\,17\,19\,20,  
\end{array}
\]
}%
and obtain as a lower envelope (i.e., boundary) of the above tetrahedra 13 triangles, which, together with the back triangle $1\,2\,3$,
give a triangulated $2$-sphere; see Figure~\ref{fig:identified_stacked_shielded} (right). We close this void by taking the cone 
over these $13+1$ triangles with respect to the additional vertex 21:
{\small
\[
\begin{array}{l@{\hspace{3.5mm}}l@{\hspace{3.5mm}}l@{\hspace{3.5mm}}l@{\hspace{3.5mm}}l@{\hspace{3.5mm}}l@{\hspace{3.5mm}}l}

1\,2\,14\,21, & 1\,3\,18\,21, & 1\,14\,19\,21, & 1\,18\,19\,21, & 2\,3\,16\,21, & 2\,14\,15\,21, & 2\,15\,16\,21, \\
3\,16\,17\,21, & 3\,17\,18\,21, & 14\,15\,19\,21, & 15\,16\,17\,21, & 15\,17\,19\,21, & 17\,18\,19\,21, \\[2mm]

    &&& 1\,2\,3\,21.
\end{array}
\]
}%

The resulting complex $\Sigma_2\cup\Sigma'_3$ has face vector $f=(25,128,218,114)$.

\paragraph*{Part II.}
We now describe the small discrete Morse vectors that can be obtained for the complex $\Sigma_2\cup\Sigma'_3$.
\begin{compactitem}
\item The vector $(1,1,1,0)$ is achievable. In fact, we can start with collapsing away 
all tetrahedra, where the only way to begin with is via the unique free triangle $1\,2\,3$.
Once we enter into the solid $3$-dimensional body of $\Sigma'_3$ we layer-wise collapse away the ``interior'', 
so that we are left with the identified (upper) boundary of $\Sigma'_3$. Under the identifications, the seven copies 
of the subdivided triangle $1\,2\,3$ fold up to just one copy, as displayed in Figure~\ref{fig:subdivided_triangle}, 
which, in turn, replaces the triangle $1\,2\,3$ in Figure~\ref{fig:sigma_2_relabeled}.
However, the once free edge $1\,2$ then is still glued to an interior edge and thus is not free. As a consequence,
a triangle, say, $1\,2\,4$ has to be marked as critical before we can continue with collapses. But then we are also forced
to pick up a critical edge, yielding $(1,1,1,0)$ as discrete Morse vector.

\item The vector $(1,0,1,1)$ can also be achieved. Let us initially remove a tetrahedron, say, $1\,2\,3\,21$ as critical 
and then empty out the interior of the solid $3$-dimensional body. We then are left with $\Sigma_2$ of Figure~\ref{fig:sigma_2_relabeled}
union the ``upper'' triangles of Figure~\ref{fig:subdivided_triangle}, which together is a space homotopy equivalent to the $2$-sphere.
If we remove one of the triangles of Figure~\ref{fig:subdivided_triangle} as critical, we can collapse down to $\Sigma_2$,
which is collapsible, thus resulting in $(1,0,1,1)$ as a discrete Morse vector.

\item The vectors $(1,1,1,0)$ and $(1,0,1,1)$ are not optimal for $\Sigma_2\cup\Sigma'_3$ --- in fact, $\Sigma_2\cup\Sigma'_3$ is collapsible
with optimal discrete Morse vector $(1,0,0,0)$. A respective collapsing is not obvious (we found some with the computer). 
It has to start via the unique free triangle $1\,2\,3$. Instead of emptying out the whole interior of the solid body and not touching
its identified boundary, we have to drill tunnels so that one of the nine triangles of Figure~\ref{fig:subdivided_triangle} becomes free.
Then we can perforate the membrane of triangles of Figure~\ref{fig:subdivided_triangle} and, eventually, 
free the edge $1\,2$.
\end{compactitem}

\paragraph*{Part III.}
In this part, we further modify  $\Sigma_2\cup\Sigma'_3$ to obtain a contractible space that is not collapsible, but still admits 
the vectors $(1,1,1,0)$ and $(1,0,1,1)$. To this end, we glue on top of each of the triangles
 $$t_1\,t_2\,t_3\in \{1\,2\,5, 1\,2\,6, 1\,2\,4, 1\,3\,5, 1\,4\,6, 2\,3\,4, 2\,5\,6, 3\,4\,6, 3\,5\,6\}$$
a copy of $\Sigma_3$ along its free face, thus contributing $12-3=9$ additional vertices for each copy.
The encoding of the copies of $\Sigma_3$ is as follows.
For $a=26+9j$, $j=0,\dots,8$, and $b=a+k$, $k=1\dots 7$,
we add the tetrahedra
{\small
\[
\begin{array}{l@{\hspace{3.5mm}}l@{\hspace{3.5mm}}l}
 t_1\,t_2\,a\,b,   &   t_1\,t_3\,a\,b,   &    t_2\,t_3\,a\,b  \\
\end{array}
\]
}%
to shield off the nine stacking vertices $a=26+9k$, $j=0,\dots,8$. We then glue in
the tetrahedra
{\small
\[
\begin{array}{l@{\hspace{3.5mm}}l}
t_2\,t_3\,a+1\,a+2,  &   t_1\,t_2\,a+2\,a+3,   \\ 
t_1\,t_3\,a+3\,a+4,   &  t_2\,t_3\,a+4\,a+5,   \\ 
t_1\,t_2\,a+5\,a+6,   &  t_1\,t_3\,a+6\,a+1
\end{array}
\]
}%
as well as
{\small
\[
\begin{array}{l@{\hspace{3.5mm}}l@{\hspace{3.5mm}}l}
t_1\,t_3\,a+2\,a+7,   &   t_1\,t_2\,a+4\,a+7,   &  t_2\,t_3\,a+6\,a+7 \\
\end{array}
\]
}%
to join the cones over the stacked triangles and then the tetrahedra
{\small
\[
\begin{array}{l@{\hspace{3.5mm}}l}
t_1\,a+2\,a+3\,a+4,   &  t_1\,a+2\,a+4\,a+7,  \\
t_2\,a+4\,a+5\,a+6,   &  t_2\,a+4\,a+6\,a+7,  \\
t_3\,a+1\,a+2\,a+6,   &  t_3\,a+2\,a+6\,a+7
\end{array}
\]
}%
over the ``boundary'' edges, with 
{\small
\[
\begin{array}{l@{\hspace{3.5mm}}l}
a+2\,a+4\,a+6\,a+7
\end{array}
\]
}%
as a final shielding piece. In a last step, we glue in the cone with respect to the vertex $a+8$ to fill in the central void:
{\small
\[
\begin{array}{l@{\hspace{5mm}}l@{\hspace{3.5mm}}l@{\hspace{5mm}}l}
t_1\,t_2\,t_3\,a+8,      \\
t_1\,t_2\,a+1\,a+8,    &   t_1\,t_3\,a+5\,a+8,     &  t_2\,t_3\,a+3\,a+8, \\
t_1\,a+1\,a+6\,a+8,   &   t_1\,a+5\,a+6\,a+8,   \\
t_2\,a+1\,a+2\,a+8,   &   t_2\,a+2\,a+3\,a+8,   \\
t_3\,a+3\,a+4\,a+8,    &  t_3\,a+4\,a+5\,a+8,  \\
a+1\,a+2\,a+6\,a+8,  &   a+2\,a+3\,a+4\,a+8,  &    a+4\,a+5\,a+6\,a+8,  \\
a+2\,a+4\,a+6\,a+8.
\end{array}
\]
}%
For simplicity, the resulting complex $\Sigma_2\cup\Sigma'_3\cup_{i=1,\dots,9}\Sigma_3$
will be called \texttt{two\_optima}; a list of facets of the complex is available online at \cite{BenedettiLutz_LIBRARY}.

To see that the  complex \texttt{two\_optima} admits the discrete Morse vectors $(1,1,1,0)$ and $(1,0,1,1)$ 
we can proceed as above:
\begin{compactitem}
\item For $(1,1,1,0)$, we enter via the unique free triangle $1\,2\,3$, empty out the interior of $\Sigma'_3$.
         At this point, we can enter the nine copies of $\Sigma_3$ via the freed triangles
         $1\,2\,5$, $1\,2\,6$, $1\,2\,4$, $1\,3\,5$, $1\,4\,6$, $2\,3\,4$, $2\,5\,6$, $3\,4\,6$, $3\,5\,6$ 
         and empty out the interiors of the copies. By construction, the edge $1\,2$ then is blocked, 
         but can be freed once we declare one of the boundary triangles of $\cup_{i=1,\dots,9}\Sigma_3$ to be critical.
\item For the vector $(1,0,1,1)$ we can proceed similarly, once we have marked as critical and 
         removed one of the interior tetrahedra of $\Sigma'_3$.
\end{compactitem}

Finally, we need to see that \texttt{two\_optima} is not collapsible.
Since each copy of $\Sigma_3$ can only be entered via one free triangle (say, $a\,b\,c$) that is glued 
on top of $\Sigma'_3$, we need to first empty out all tetrahedra of $\Sigma'_3$ that touch the triangle $a\,b\,c$
before we can enter the respective copy of $\Sigma_3$. At that point, the copy of $\Sigma_3$
is glued to the remainder via the three boundary edges $a\,b$, $a\,c$ and $b\,c$ of the triangle $a\,b\,c$.
These three edges stay fixed, in particular, cannot be free. Once we have collapsed the copy of $\Sigma_3$, 
we obtain a membrane, which is (homotopy equivalent to) a $2$-dimensional disc glued in along 
the boundary of the triangle $a\,b\,c$. It follows that none of the edges $a\,b$, $a\,c$ and $b\,c$
as well as none of the boundary edges of the other copies $\Sigma_3$ can become free,
\emph{before} we mark some triangle as critical and perforate the collection of $2$-dimensional
membranes this way.
\end{proof}

Our construction can be generalized to dimensions $d>3$.

\begin{lem}\label{lem:homot_disk}
Let $D$ be a $d$-disk, and let $\gamma$ be any $(d-2)$-loop in $\partial D$. Let $D' \subset D$ be any CW complex 
such that $D$ deformation retracts to $D'$ which contains $\gamma$. Then there is a $(d-1)$-disk $d' \hookrightarrow D'$ 
with $\partial d'\hookrightarrow \gamma$. 
\end{lem}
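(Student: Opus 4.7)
My plan is to combine two elementary facts: the contractibility of $D$ (which forces $\gamma$ to bound inside $D$), and the defining property of a deformation retraction (which fixes its image pointwise, so a filling may be pushed into $D'$ without disturbing its boundary).

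\textbf{Step 1: cap off $\gamma$ inside $D$.} Since $D$ is a $d$-disk it is contractible, so $\pi_{d-2}(D)=0$. The $(d-2)$-loop $\gamma\colon S^{d-2}\to\partial D\hookrightarrow D$ is therefore null-homotopic in $D$, and hence extends to a continuous map $\bar\gamma\colon B^{d-1}\to D$ with $\bar\gamma|_{\partial B^{d-1}}=\gamma$. In other words, $\gamma$ bounds a (singular) $(d-1)$-disk $\bar\gamma$ inside $D$.

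\textbf{Step 2: push the filling into $D'$.} Let $r\colon D\to D'$ be the retraction realizing the given deformation retraction. By hypothesis $\gamma\subset D'$ and $r|_{D'}=\mathrm{id}_{D'}$, so the composition
\[
d' \;:=\; r\circ\bar\gamma\colon B^{d-1}\longrightarrow D'
\]
is a $(d-1)$-disk inside $D'$ whose boundary is still $\gamma$. This is the desired disk.

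\textbf{Main obstacle.} The only delicate point is the interpretation of the symbol $\hookrightarrow$ in the conclusion. The construction above produces a continuous, possibly non-injective image of $B^{d-1}$ in $D'$; if a genuinely embedded PL subcomplex is required, one must further invoke a general-position / PL-transversality argument inside $D'$ (perhaps after a subdivision) to replace $r\circ\bar\gamma$ by a true embedding with the same boundary. In either reading, existence of the filling reduces to the two steps above, so this is really a bookkeeping rather than a conceptual obstacle.
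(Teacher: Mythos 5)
Your proof is correct and follows essentially the same route as the paper's: cap off $\gamma$ by a $(d-1)$-disk (the paper chooses one inside $\partial D$, you use contractibility of $D$, which makes no difference) and then push it into $D'$ via the deformation retraction, which fixes $\gamma \subset D'$ pointwise. Your explicit remark about the singular-versus-embedded issue is a caveat the paper's own two-line proof silently shares, so no further comparison is needed.
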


\begin{proof}
Choose a disk $d\in \partial D$ with $\partial d = \gamma$. Then the deformation retract $f_t: D\longrightarrow D'$ deforms $d$ to $d'$. 
\end{proof}

\begin{thm}\label{thm:two}
For every $d\geq 3$ there is a contractible, but non-collapsible simplicial $d$-complex
that has two distinct optimal discrete Morse vectors $(1,0,\dots,0,1,1,0)$ and $(1,0,\dots,0,0,1,1)$.
\end{thm}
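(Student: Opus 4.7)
The plan is to generalize the construction of \texttt{two\_optima} from Theorem~\ref{thm:two_3d} dimension by dimension, following the same three-step pattern used there, and to use Lemma~\ref{lem:homot_disk} as the main technical tool for the non-collapsibility argument.

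First I would set up the basic building blocks. Take the collapsible complex $\Sigma_{d-1}$ from Theorem~\ref{thm:Sigma}, which has a unique free $(d-2)$-face $\sigma$ contained in a unique $(d-1)$-face $\tau$. Take $\Sigma_d$ from the same theorem, with unique free $(d-1)$-face $\tau'$. Identify $\tau$ with $\tau'$ to obtain a non-pure $d$-complex; this corresponds to the ``preliminary example'' in the $d=3$ case. Then construct $\Sigma'_d$ from $\Sigma_d$ by subdividing the identified boundary of $\Sigma_d$ in a neighborhood of $\tau'$ and gluing $\sigma$ to a subsegment of this subdivision (the higher-dimensional analog of identifying the edge $1\,2$ with a segment inside the stacked triangle $1\,3\,4$). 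This identification is chosen so that $\sigma$ and its image meet only in a codimension-one piece, so that $\Sigma'_d$ remains homotopy equivalent to $\Sigma_d$, and so that $\sigma$ is no longer free in $\Sigma_{d-1}\cup\Sigma'_d$. Finally, attach additional copies of $\Sigma_d$ along each of finitely many $(d-1)$-faces of $\Sigma'_d$ chosen so that every $(d-1)$-face adjacent to $\sigma$ in $\Sigma_{d-1}$ is ``shielded'' by one such copy; call the result $K_d$.

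Next I would verify that both vectors are achievable on $K_d$. For $(1,0,\dots,0,1,1,0)$: start collapsing from the unique free $(d-1)$-face inherited from $\Sigma_{d-1}\cup\Sigma'_d$, empty out the $d$-dimensional interior of $\Sigma'_d$ and of all attached copies of $\Sigma_d$ (which become accessible once their free $(d-1)$-faces appear). What remains is $\Sigma_{d-1}$ glued to a collection of $(d-1)$-dimensional membranes. Mark one $(d-1)$-face of a membrane as critical, perforate the membrane by further collapses until $\sigma$ becomes free, at which point $\Sigma_{d-1}$ collapses onto a point after a single extra critical $(d-2)$-cell (forced by Euler characteristic). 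For $(1,0,\dots,0,0,1,1)$: instead remove one critical $d$-face from the interior of $\Sigma'_d$ at the start; this opens enough room to collapse the entire $d$-part onto the membrane configuration, after which a single critical $(d-1)$-cell suffices to reduce to $\Sigma_{d-1}$, which then collapses. Both sequences are direct higher-dimensional analogs of the ones from Part~II of the proof of Theorem~\ref{thm:two_3d}.

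The main obstacle is to show that $K_d$ is non-collapsible, ruling out the vector $(1,0,\dots,0)$. Here Lemma~\ref{lem:homot_disk} is the decisive input. Suppose for contradiction that $K_d$ collapses to a point; then at some stage every attached $\Sigma_d$-copy must be emptied out, and emptying a copy (which can only be entered through its unique free $(d-1)$-face) must leave behind a $(d-1)$-dimensional CW complex $D'$ to which the corresponding $d$-disk in $\Sigma_d$ has deformation-retracted. The boundary sphere of the shielded $(d-1)$-face plays the role of the loop $\gamma$ in Lemma~\ref{lem:homot_disk}, and the lemma produces a $(d-1)$-disk $d'\hookrightarrow D'$ spanning this sphere. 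These disks form membranes that sit transversely across every route by which $\sigma$ could become free: $\sigma$ is contained in $\Sigma_{d-1}$ and any collapse freeing it must pass through a face of one of the membranes, which by construction is not free. Hence no collapse sequence can reach the trivial Morse vector, so $(1,0,\dots,0)$ is unattainable. By the discrete Morse inequalities together with the Euler characteristic identity $\sum(-1)^ic_i=\chi(K_d)=1$, any Morse vector with fewer than three critical faces is impossible for a contractible complex (two critical faces would force $\chi\in\{0,2\}$). Thus both $(1,0,\dots,0,1,1,0)$ and $(1,0,\dots,0,0,1,1)$, each of size $3$, are optimal, completing the proof.
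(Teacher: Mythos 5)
Your proposal follows essentially the same route as the paper: gluing $\Sigma_{d-1}$ along its unique free face to a modified copy $\Sigma'_d$ in which the free $(d-2)$-face of $\Sigma_{d-1}$ is identified with a disk on the identified boundary of $\Sigma_d$ (meeting it in a contractible set), shielding the $(d-1)$-faces of that boundary with further copies of $\Sigma_d$, invoking Lemma~\ref{lem:homot_disk} to rule out perforation and hence collapsibility, and using the Euler-characteristic/Morse-inequality count to certify that three critical cells is optimal. The paper's own proof is only a four-sentence sketch of this construction, so your more detailed verification of the two achievable vectors and of non-collapsibility is a faithful (indeed fuller) rendering of the same argument.
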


\begin{proof}
As before in the $3$-dimensional case, for $d\geq 4$, we start with a copy of $\Sigma_{d-1}$ 
for which its free face is labeled $1\,2\,\dots\,d-1$ and is contained in the $(d-1)$-simplex
$1\,2\,\dots\,d-1\,d$. On top of the  $(d-1)$-simplex $1\,2\,\dots\,d-1\,d$ of $\Sigma_{d-1}$ we glue a modified copy
$\Sigma'_d$ of $\Sigma_d$ along the unique free face $1\,2\,\dots\,d-1\,d$ of $\Sigma'_d$. 
Here, the modified version $\Sigma'_d$ is obtained from $\Sigma_d$ be identifying  the face $1\,2\,\dots\,d-1$ 
with a $(d-2)$-dimensional disk on the identified boundary of $\Sigma_d$ such that the face $1\,2\,\dots\,d-1$
and its image intersect in the vertex $1$.
On top of the $(d-1)$-dimensional faces of the identified boundary of $\Sigma'_d$ (up to a suitable subdivision), 
we glue copies of $\Sigma_d$ to avoid (by Lemma~\ref{lem:homot_disk}) perforation.
\end{proof}

\begin{rem}  \rm
Each of the discrete Morse algorithms \texttt{random}, \texttt{random-lex-first}, \texttt{random-lex-last}
does not remove any $d$-face as critical as long as nontrivial collapses of $d$-faces are possible. 
Therefore, when applied to \texttt{two-optima}, none of the proposed algorithms can see a discrete Morse function 
with a critical $3$-cell --- and indeed, we only found $(1,1,1,0)$ in our computations and never picked up extra cells 
in the lower dimensions $0$, $1$, and $2$.
\end{rem}

In \cite{BenedettiLutz2014}, the \emph{discrete Morse spectrum} of a simplicial complex was defined
as the set of all possible outcomes of the random discrete Morse algorithm along with the 
respective probabilities for the individual vectors. We next give an alternative definition
in terms of monotone discrete Morse functions.

\begin{deff}[Monotone discrete Morse function]\label{def:montone}
A \emph{monotone discrete Morse function} on a simplicial complex $C$ is a map $f: C \rightarrow \mathbb{Z}$ 
satisfying the following six axioms:
\begin{compactenum}[\rm (i)]
\item if $\sigma \subseteq \tau$, then $f(\sigma) \le f(\tau)$;
\item the cardinality of $f^{-1} (q)$ is at most $2$ for each $q \in \mathbb{Z}$ ;
\item if $f(\sigma) = f(\tau)$, then either $\sigma \subseteq \tau$ or $\tau \subseteq \sigma$;
\item for any $\sigma \subseteq \tau$ and $\sigma' \subseteq \tau'$,
if $f(\sigma) = f(\tau) \le f(\sigma') =f(\tau')$ then 
$\dim \tau \le \dim \tau'$;
\item $f(C) = [0, M]$, for some $M \in \mathbb{N}$;
\item for any \emph{critical face} $\Delta$ (that is, a face such that $f(\sigma) \ne f(\Delta)$ for each face $\sigma \ne \Delta$), 
the complex $\{ \sigma \in C \textrm{ s.t. } f(\sigma) \le f(\Delta) \}$ has no free $(\dim \Delta -1)$-dimensional face.
\end{compactenum}
\end{deff}

\begin{deff}[Discrete Morse spectrum] \label{def:DMS}
Let $C$ be any simplicial complex. The \emph{discrete Morse spectrum} of $C$ is the set of all discrete Morse vectors 
of all monotone discrete Morse functions on~$C$.
\end{deff}

\begin{rem} \rm
Given an arbitrary discrete Morse function $f$ in the sense of Forman, it is easy to produce a function $g$ that induces 
the same Morse matching of $f$ and in addition satisfies (i), (ii), (iii), (iv) and (v). In fact, as explained by Forman \cite{Forman2002}, 
the function $f$ induces a step-by-step deconstruction of the complex: in the $i$-th step we either delete a critical face $\Delta_i$, 
or we delete a pair of faces $(\sigma_i, \Sigma_i)$, with $\sigma_i$ free. In the first case, let us set $\tilde{g} (\Delta_i)= -i$; 
in the second case, we set $\tilde{g} (\sigma_i)= \tilde{g}(\Sigma_i)=-i$. We leave it to the reader to verify that the resulting map $\tilde{g}$ 
satisfies axioms (i) to (iv). If $M = \min \{\tilde{g}(\sigma): \sigma \in C\}$, the function $g$ we seek is then defined by
$g (\sigma) = \tilde{g}(\sigma) - M$ for each $\sigma$.
\end{rem}

\begin{rem} \label{rem:restrictive}
\rm The sixth axiom of Definition~\ref{def:montone} is instead ``really restrictive''. If $C$ is any complex of dimension~$\ge 1$, 
the $f$-vector of $C$ is \emph{not} in the discrete Morse spectrum, though it is obtainable by some discrete Morse function 
in the sense of Forman (corresponding to the ``empty  matching'', which leaves all faces critical). The sixth axiom in fact forbids us 
to declare a $k$-face critical if there are still free $(k-1)$-faces available. 
\end{rem}

Any vector output from any of the discrete Morse strategies \texttt{random, random-lex-first, random-lex-last}, is by construction 
the discrete Morse vector of a \emph{monotone} discrete Morse function. Therefore, none of them will ever output a vector 
corresponding to the empty matching. However, this is not at all a drawback: As we said, the essence of Forman's theory 
is to come up with discrete Morse vectors that are \emph{as small as possible}. So it is actually time-saving if some very 
non-optimal matchings (like the empty one, which is the worst possible) are systematically neglected by the algorithm we are using. 

We observed experimentally that these algorithms often find \emph{optimal} matchings. This naturally triggers the question 
if the discrete Morse spectrum always contains an optimal vector. (If this is the case, then it might actually be more efficient, 
for all sorts of discrete-Morse-theoretic computations, to switch to the ``monotone discrete Morse function'' setup.)

What we can derive from Theorem \ref{thm:two} is that not \emph{all} optimal discrete Morse vectors belong to the spectrum:

\begin{cor} 
On the complex  \texttt{two\_optima} of Theorem \ref{thm:two}, no monotone discrete Morse function reaches the optimal Morse vector $(1,0,1,1)$.
\end{cor}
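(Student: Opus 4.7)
The plan is a proof by contradiction. Assume a monotone discrete Morse function $f$ on $\texttt{two\_optima}$ achieves the Morse vector $(1,0,1,1)$, with unique critical tetrahedron $T^\star$. Axiom~(iv) of Definition~\ref{def:montone} forces every matched (edge, triangle) and (vertex, edge) pair of $f$ to have $f$-value strictly below that of every matched (triangle, tetrahedron) pair; combined with axiom~(vi) applied to $T^\star$, this implies that the subcomplex
\[C^\star \;=\; \{\sigma \colon f(\sigma) \le f(T^\star)\}\]
arises from $\texttt{two\_optima}$ solely by elementary (triangle, tetrahedron)-collapses and contains no free triangle, yet still contains the tetrahedron $T^\star$ and hence at least one other tetrahedron (otherwise any triangle face of $T^\star$ in $C^\star$ would be free). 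The Corollary therefore reduces to the following \emph{key claim}: starting from $\texttt{two\_optima}$, every sequence of elementary (triangle, tetrahedron)-collapses can be extended until all tetrahedra are removed. Once this is established, axiom~(vi) can never trigger the declaration of a critical tetrahedron in a monotone $f$, so $c_3 = 0$, contradicting $c_3 = 1$.

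To prove the key claim I would exploit the block decomposition $\texttt{two\_optima} = \Sigma_2 \cup \Sigma'_3 \cup \bigcup_{i=1,\dots,9}\Sigma_3$ from Theorem~\ref{thm:two_3d}. The unique initially free triangle $1\,2\,3$ collapses against $1\,2\,3\,21$, exposing $1\,2\,21$, $1\,3\,21$, $2\,3\,21$ as new free triangles and triggering a collapse wave that peels off the cone tetrahedra of $\Sigma'_3$ inward from the apex $21$. Once the cone tetrahedra of $\Sigma'_3$ incident to a given entry triangle $abc \in \{1\,2\,5,\, 1\,2\,6,\, 1\,2\,4,\, 1\,3\,5,\, 1\,4\,6,\, 2\,3\,4,\, 2\,5\,6,\, 3\,4\,6,\, 3\,5\,6\}$ have been cleared, the attached copy of $\Sigma_3$ becomes accessible through its free face $abc$ and, being endocollapsible by Lemma~\ref{lem:Antiprism}, collapses fully from its own cone apex $a+8$. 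Since distinct pieces of the decomposition share only $2$-dimensional boundary membranes, collapses in one piece do not obstruct collapses in any other.

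The main obstacle is to lift the existence of \emph{some} successful collapse sequence (visible from the construction) to the \emph{order-independent} assertion of the key claim: an adversary might choose free triangles in any order and potentially strand tetrahedra without any free triangle. To handle this one needs a confluence argument inside each piece. The plan is an induction on the number of cone tetrahedra already removed from the apex ($21$ in $\Sigma'_3$, $a+8$ in each $\Sigma_3$), showing that the remaining cone tetrahedra always form a star-shaped sub-region in which at least one boundary triangle is still the unique tetrahedral coface of its triangle. Once this confluence is verified, axiom~(vi) can never force the declaration of a critical tetrahedron in a monotone function on $\texttt{two\_optima}$, and the contradiction above completes the proof.
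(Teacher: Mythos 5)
Your reduction is essentially the one the paper intends: by axioms (i)--(iv) and (vi) of Definition~\ref{def:montone}, a critical tetrahedron can only be declared when the current sublevel complex still contains tetrahedra but no triangle is a face of exactly one of them, so the corollary follows once one knows that \emph{every} maximal sequence of elementary (triangle, tetrahedron)-collapses of \texttt{two\_optima} removes all tetrahedra. (A minor quibble: the sublevel complex $C^\star$ need not arise \emph{solely} from such collapses --- a maximal triangle of the $\Sigma_2$-part, say, could be declared critical at a higher level --- but by axiom (i) no triangle of a surviving tetrahedron can have been removed earlier, so the free-triangle condition is indeed governed only by the (triangle, tetrahedron)-collapses and your reduction stands. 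This is exactly the content of the Remark preceding the corollary.)

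The genuine gap is your ``key claim''. You correctly identify order-independence as the main obstacle, but you do not prove it: greedy top-dimensional collapsing of a contractible $3$-complex can in general strand tetrahedra (this is essentially why deciding collapsibility is hard), so confluence must be extracted from the specific layered structure of the construction, and your proposed invariant (``the remaining cone tetrahedra always form a star-shaped sub-region with a free boundary triangle'') is neither made precise nor verified. Moreover, your description of that structure is inaccurate in ways that matter for such an argument: $\Sigma'_3$ is not a cone with apex $21$ --- the cone over vertex $21$ is only the innermost of several shielding layers, and the collapse entering at $1\,2\,3\,21$ must propagate \emph{outward} through them, not ``inward from the apex''; the attached copies of $\Sigma_3$ are not balls to which the endocollapsibility of Lemma~\ref{lem:Antiprism} directly applies (they carry boundary identifications and merely have a single free face); and since a triangle shared by two blocks is free only when \emph{all but one} of its tetrahedral cofaces are gone, the assertion that collapses in one block cannot obstruct another also needs proof. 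To be fair, the paper itself closes this step only semi-formally (through the design of the shielding layers together with computational verification), so your outline is not wrong --- but as written the decisive step is asserted rather than proved.
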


However, the vector $(1,1,1,0)$ is reached (in 10000 out of 10000 runs each) by all three strategies
\texttt{random}, \texttt{random-lex-first}, and \texttt{random-lex-last}.

\begin{prob}
Is at least one of the optimal discrete Morse vectors of a given simplicial complex reachable via a monotone discrete Morse function?
If so, does the lexicographically-largest among the optimal discrete Morse vectors always belong  to the spectrum?
\end{prob}

\section{A collapsible 5-manifold different from the 5-ball}

According to Whitehead \cite{Whitehead1939}, every collapsible combinatorial $d$-manifold 
is a combinatorial \mbox{$d$-ball}. 
On the other hand, every contractible $d$-manifold, if $d \ne 4$, admits a collapsible triangulation \cite{AdiprasitoBenedetti2011pre}; 
and in each dimension $d\geq 5$, there exist non-PL triangulations of contractible $d$-manifolds \emph{different} from $d$-balls.

In this section, our aim is to construct a first explicit ``small'' example of a collapsible non-PL triangulation of a contractible $5$-manifold 
different from the $5$-ball. Our construction is in seven steps and is based on ideas from \cite{AdiprasitoBenedetti2011pre}. 
In every step of the construction, we try to save on the size of the intermediate complexes.

\medskip

\noindent
\emph{1. Start with a (small) triangulation of a non-trivial homology $3$-sphere.}
      The smallest known triangulation of a non-trivial homology
      $3$-sphere is the $16$-vertex triangulation \texttt{poincare}~\cite{BjoernerLutz2000} 
      of the Poincar\'e homology $3$-sphere $\Sigma^3$. 
      In fact, the triangulation \texttt{poincare} with  
      $f=(16,106,180,90)$
      is conjectured to be the smallest triangulation of $\Sigma^3$ 
      and to be the unique triangulation of~$\Sigma^3$ with 
      this $f$-vector \cite[Conj.~6]{BjoernerLutz2000}.
      Triangulations of other non-trivial homology $3$-spheres 
      are believed to require more vertices and faces than \texttt{poincare}; cf.~\cite{LutzSulankeSwartz2009}.

\medskip
\smallskip

\noindent
\emph{2. Remove a (large) triangulated ball.}
      The triangulation \texttt{poincare} has vertex-valence-vector
      $\mbox{\rm val}=(14,14,11,14,14,11,14,12,13,15,15,14,15,15,15,6).$
      In particular, there are five vertices of valence $15$ 
      whose vertex-stars have $26$ tetrahedra each. 
      We remove the star of vertex $15$ from \texttt{poincare} and relabel
      vertex $16$ to $15$. The resulting $15$-vertex triangulation
      \texttt{poincare\_minus\_ball} has $64$ tetrahedra.

\medskip
\smallskip

\noindent
\emph{3. Take the cross product with an interval.}
      The cross product of \texttt{poincare\_minus\_ball} with an interval $I$ 
      is a prodsimplicial complex with $2\cdot 15=30$ vertices. Any prodsimplicial
      complex can be triangulated without adding new vertices; for product triangulations see \cite{Lutz2003bpre} 
      and references therein. This way, we obtain a $4$-dimensional simplicial complex
      \texttt{poincare\_minus\_ball\_x\_I}, which is a $30$-vertex triangulation
      of a $4$-manifold $K^4$ that has the connected sum $\Sigma^3\#\Sigma^3$ as its boundary.

\medskip
\smallskip

\noindent
\emph{4. Add cone over boundary.}
      If we add to $K^4$ the cone over its boundary with respect to 
      a new vertex~$31$, then the resulting $4$-dimensional
      simplicial complex $L^4$ is a combinatorial $4$-pseudomanifold,
      i.e., all of its vertex-links are combinatorial $3$-manifolds.
      In fact, the links of the vertices $1,\dots,30$ in $L^4$
      are triangulated $3$-spheres, whereas the link of vertex $31$ in $L^4$
      is a triangulation of the homology $3$-sphere $\Sigma^3\#\Sigma^3$.

\medskip
\smallskip

\noindent
\emph{5. Perform a one-point suspension.}
      By the double suspension theorem of Edwards \cite{Edwards1975} 
      and Cannon~\cite{Cannon1979}, the double suspension $\mbox{susp}(\mbox{susp}(H^d))$ 
      of any homology $d$-sphere $H^d$ is homeomorphic to the standard sphere $S^{d+2}$.
      In our case, $L^4$ is homeomorphic to the single suspension $\mbox{susp}(\Sigma^3\#\Sigma^3)$
      and therefore
       \[\mbox{susp}(L^4)\cong\mbox{susp}(\mbox{susp}(\Sigma^3\#\Sigma^3))\cong S^5.\]
              
      The triangulation $\mbox{susp}(L^4)$ of $S^5$ with $31+2$ vertices is non-PL, 
      since $\Sigma^3\#\Sigma^3$ occurs as the link of the edge 31--33
      (and of the edge 31--32) in $\mbox{susp}(L^4)$.
      Instead of taking the standard suspension $\mbox{susp}(L^4)$ with respect to two new
      vertices $32$ and $33$, the one-point suspension uses  
      one vertex less \cite{BjoernerLutz2000} --- for this,
      we simply contract the edge $31$--$33$ in $\mbox{susp}(L^4)$. 
      The resulting complex $\Sigma^5_{32}$ then has face-vector $f(\Sigma^5_{32})=(32,349,1352,2471,2154,718)$.

\medskip
\smallskip

\noindent
\emph{6. Subdivide barycentrically.}
      Let $\mbox{sd}(\Sigma^5_{32})$ be the barycentric subdivision of $\Sigma^5_{32}$
      with 
      \[f(\mbox{sd}(\Sigma^5_{32}))=(7076, 152540, 807888, 1696344, 1550880, 516960).\]
      This step is expensive, but paves the way for the final part of our construction.

\medskip
\smallskip

\noindent
\emph{7. Collar the PL singular set in\, $\mbox{\rm sd}(\Sigma^5_{32})$.}
      In the barycentric subdivision $\mbox{sd}(\Sigma^5_{32})$,
      there are precisely three vertices, $v_{\,31}$, $v_{\,31\mbox{--}32}$ and $v_{\,32}$,
      that do not have combinatorial $4$-spheres as their vertex-links
      --- these three vertices are connected by the edges 
      $e_{\,31,31\mbox{--}32}$ and $e_{\,31\mbox{--}32,32}$ 
      for which their links are triangulations of $\Sigma^3\#\Sigma^3$.
      The (contractible!) subcomplex of $\mbox{sd}(\Sigma^5_{32})$ formed by these two edges 
      is the \emph{PL singular set} of $\mbox{sd}(\Sigma^5_{32})$. 
      Let the collar of the contractible PL singular set in $\mbox{sd}(\Sigma^5_{32})$
      be the simplicial complex \texttt{contractible\_non\_5\_ball}; see \cite{BenedettiLutz_LIBRARY} for lists of facets of the complex
      and of its boundary \texttt{contractible\_non\_5\_ball\_boundary}.
      Then \texttt{contractible\_non\_5\_ball} 
      is a non-PL triangulation of a contractible $5$-manifold 
      different from the $5$-ball~$B^5$ with $f$-vector 
      \[f(\mbox{\texttt{contractible\_non\_5\_ball}})=(5013,72300,290944,495912,383136,110880).\]

\begin{thm}\label{thm:non_5_ball}
There is a collapsible $5$-manifold \texttt{contractible\_non\_5\_ball} 
different from the $5$-ball with $f=(5013,72300,290944,495912,383136,110880)$.
\end{thm}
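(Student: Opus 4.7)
The complex \texttt{contractible\_non\_5\_ball} has already been constructed explicitly in seven steps, so the proof reduces to four verifications: the $f$-vector, the $5$-manifold property, collapsibility, and non-ballness. The $f$-vector is obtained by direct enumeration from the construction of step~7 as a subcomplex of $\mathrm{sd}(\Sigma^5_{32})$, and is confirmed by counting the facets in the explicit list provided in \cite{BenedettiLutz_LIBRARY}.

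For the manifold property, I would invoke the Edwards--Cannon double suspension theorem: it guarantees that the underlying space of $\mathrm{sd}(\Sigma^5_{32})$ is homeomorphic to $S^5$. Since \texttt{contractible\_non\_5\_ball} is the collar (a regular simplicial neighborhood) of a subcomplex of this topological $5$-manifold, its underlying space is automatically a compact topological $5$-manifold with boundary, homotopy equivalent to the contractible PL singular set and hence contractible. Collapsibility is certified by a direct computer run of the \texttt{random-discrete-Morse} algorithm of \cite{BenedettiLutz2014}; as reported in the Introduction, the algorithm terminates on \texttt{contractible\_non\_5\_ball} producing a discrete Morse function with only one critical cell, which by Forman's theory is equivalent to collapsibility.

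The most delicate point is non-ballness. Let $N$ denote the underlying space of \texttt{contractible\_non\_5\_ball}. The plan is to prove $\pi_1(\partial N) \neq 1$: since the boundary of a topological $5$-ball is the simply connected $4$-sphere $S^4$, this immediately forces $N \not\cong B^5$. The interior of $N$ contains the two PL-singular edges $e_{\,31,31\mbox{--}32}$ and $e_{\,31\mbox{--}32,32}$, whose links in $\mathrm{sd}(\Sigma^5_{32})$ are triangulations of the $3$-manifold $\Sigma^3 \# \Sigma^3$; recall that $\pi_1(\Sigma^3 \# \Sigma^3) \cong \pi_1(\Sigma^3) \ast \pi_1(\Sigma^3)$ is non-trivial. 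A van Kampen argument applied to the collar of the singular set, comparing $\partial N$ to the gluing of two copies of the link of the central vertex $v_{\,31\mbox{--}32}$ along the edge links, then produces a surjection $\pi_1(\partial N) \twoheadrightarrow \pi_1(\Sigma^3 \# \Sigma^3) \neq 1$, completing the argument.

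The hardest step is clearly this last one, since extracting the surjection requires care in tracking how the edge links glue around the central vertex. A cleaner alternative is to appeal to the general machinery of \cite{AdiprasitoBenedetti2011pre}: the collaring construction that steps~1--7 implement is shown there to always yield a non-ball contractible $5$-manifold whenever the homology $3$-sphere used in step~1 has non-trivial fundamental group, which is the case for \texttt{poincare}.
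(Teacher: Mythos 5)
Your overall architecture matches the paper's proof: the manifold property and contractibility come for free from the construction (a regular neighborhood of the contractible PL singular set, with boundary lying in the PL part of $\mathrm{sd}(\Sigma^5_{32})$), collapsibility is certified by an explicit computer run of the \texttt{random-discrete-Morse} heuristic producing the vector $(1,0,0,0,0,0)$, and non-ballness is reduced to showing that the boundary is not simply connected. The paper adds the remark that PL collapsibility (collapsibility of \emph{some} subdivision) is immediate since the complex is a regular neighborhood of a tree, but like you it relies on the computer certificate for collapsibility of the triangulation itself.

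There is, however, a concrete error in the step you yourself flag as the delicate one. The surjection $\pi_1(\partial N) \twoheadrightarrow \pi_1(\Sigma^3\#\Sigma^3)$ that you claim does not exist. Decomposing $\partial N$ along the edge links, the two end pieces are each PL homeomorphic to $(\Sigma^3\setminus\Delta)\times I$, and the inclusion of the separating copy of $\Sigma^3\#\Sigma^3$ into each end piece induces on $\pi_1$ the \emph{fold} map $G \ast G \to G$ (where $G$ is the binary icosahedral group), which is surjective but far from injective. The resulting van Kampen pushout $G \ast_{G\ast G} G$ collapses back to $G$ itself; indeed the paper identifies $\partial N$ as the double of the homology ball $(\Sigma^3\setminus\Delta)\times I$ and states that $\pi_1(\partial N)$ is the binary icosahedral group, a finite group of order $120$, which cannot surject onto the infinite group $\pi_1(\Sigma^3\#\Sigma^3) \cong G \ast G$. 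The conclusion you need, namely $\pi_1(\partial N)\neq 1$, is nonetheless true and your argument is repairable: either run the van Kampen computation honestly to get $\pi_1(\partial N)\cong G$, or follow the paper and observe directly that $\partial N$ is the double of $(\Sigma^3\setminus\Delta)\times I$, a combinatorial homology $4$-sphere with fundamental group $G\neq 1$, whereas the boundary of a $5$-ball is the simply connected $S^4$. Your fallback appeal to the general machinery of \cite{AdiprasitoBenedetti2011pre} is also legitimate.
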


\begin{proof} By construction, the simplicial complex \texttt{contractible\_non\_5\_ball}
triangulates a contractible $5$-manifold different from the $5$-ball. The boundary 
of the resulting $5$-manifold is PL homeomorphic to the double over the homology ball $\Sigma^3\setminus \Delta\times I$, 
where $\Sigma^3$ is the starting Poincar\'e homology sphere and $\Delta$ is any facet. 
In particular, the boundary of \texttt{contractible\_non\_5\_ball} is a combinatorial $4$-manifold
\texttt{contractible\_non\_5\_ball\_boundary} with trivial homology, but is not simply connected
--- it has the binary icosahedral group as its fundamental group, in contrast to the boundary of the $5$-ball.
Moreover, as a regular neighborhood of a tree, it is immediate that the complex is PL collapsible, i.e.,
some suitable subdivision of it is collapsible. Here, we used the prototype implementation \texttt{DiscreteMorse} \cite{Lutz_DiscreteMorse}
in GAP \cite{GAP4} of the random-discrete-Morse approach of~\cite{BenedettiLutz2014}
to directly find an explicit collapsing sequence. 
It took 60:17:33 h:min:sec to build the Hasse diagram of the example
containing $2\cdot 72300 + 3\cdot 290944 + 4\cdot 495912 + 5\cdot 383136 + 6\cdot 110880=5582040$
edges. In a \emph{single} random run of the program
in 21:41:31 h:min:sec, a discrete Morse vector $(1,0,0,0,0,0)$ was achieved.
This proves the collapsibility of the example.  
(Using a reimplementation by Mimi Tsuruga of  the \texttt{DiscreteMorse} program as a 
client (to appear) in the (next release of the) \texttt{polymake} system \cite{GawrilowJoswig2000},
the same result can be achieved in about 9 seconds with the 
\texttt{random-lex-first} and \texttt{random-lex-last} strategies 
and in about 10 minutes with the uniform \texttt{random} strategy,
where the \texttt{random-lex-first} and \texttt{random-lex-last} strategies 
found the optimal discrete Morse vector $(1,0,0,0,0,0)$ in 967 and in 1000
out of 1000 random runs, respectively \cite{JoswigLutzTsuruga2015pre}.)
\end{proof} 

\begin{cor}
The boundary  of \texttt{contractible\_non\_5\_ball}  is a combinatorial $4$-dimensional homology sphere 
 \texttt{contractible\_non\_5\_ball\_boundary}  with $f=(5010,65520,212000,252480,100992)$
that has the binary icosahedral group as its fundamental group.
\end{cor}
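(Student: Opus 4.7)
The plan is to derive all three assertions from the identification of the boundary that was established in the proof of Theorem~\ref{thm:non_5_ball}, namely that $\partial(\texttt{contractible\_non\_5\_ball})$ is PL-homeomorphic to the double $D\bigl((\Sigma^3 \setminus \Delta) \times I\bigr)$, where $\Sigma^3$ denotes the Poincar\'e homology $3$-sphere and $D(\,\cdot\,)$ is the doubling operation along the boundary.

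First, for the combinatorial manifold structure and the $f$-vector: since the two PL-singular edges of $\sd(\Sigma^5_{32})$ lie in the interior of the collar \texttt{contractible\_non\_5\_ball}, the boundary sits entirely in the PL part of $\sd(\Sigma^5_{32})$, so it inherits a combinatorial $4$-manifold structure. The stated $f$-vector can then be read off directly from the explicit facet list archived in~\cite{BenedettiLutz_LIBRARY}.

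For the homology computation, set $H := \Sigma^3 \setminus \Delta$ (a homology $3$-ball with $\partial H = S^2$) and $M := H \times I$. Then $M \simeq H$ is acyclic, and
\[
\partial M \;=\; (H \times \{0,1\}) \,\cup\, (S^2 \times I) \;\cong\; H \cup_{S^2} H \;=\; \Sigma^3 \# \Sigma^3,
\]
a homology $3$-sphere. A direct application of Mayer--Vietoris to $D(M) = M \cup_{\partial M} M$, using that $M$ is acyclic and $\partial M$ has the homology of $S^3$, shows that $D(M)$ has the homology of $S^4$: all reduced homology vanishes below degree $4$, and $H_4 = \mathbb{Z}$ as expected for a closed orientable $4$-manifold. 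Thus the boundary is a homology $4$-sphere.

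For the fundamental group, apply Seifert--van Kampen to the same decomposition. Write $G := \pi_1(\Sigma^3)$, the binary icosahedral group. Removing an open $3$-ball from $\Sigma^3$ does not affect $\pi_1$, so $\pi_1(M) = \pi_1(H) = G$, while $\pi_1(\partial M) = \pi_1(\Sigma^3 \# \Sigma^3) = G * G$. Since each of $H \times \{0\}$ and $H \times \{1\}$ is a deformation retract of $M$, both inclusions $\pi_1(\partial M) \hookrightarrow \pi_1(M)$ coincide with the fold map $G * G \twoheadrightarrow G$. Consequently
\[
\pi_1(D(M)) \;=\; G \ast_{G \ast G} G \;=\; G,
\]
the amalgamation collapsing the two free factors to a single copy of $G$ via the fold. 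This identifies the fundamental group of the boundary with the binary icosahedral group. The main technical point is the verification that both boundary inclusions induce the same fold map; the remainder is routine Mayer--Vietoris and van Kampen bookkeeping.
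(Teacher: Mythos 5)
Your argument is correct and follows the same route the paper intends: the corollary is not given a separate proof there, but is asserted inside the proof of Theorem~\ref{thm:non_5_ball} directly from the identification of the boundary with the double of $(\Sigma^3\setminus\Delta)\times I$, which is exactly your starting point. You merely supply the routine Mayer--Vietoris and van Kampen bookkeeping (including the correct observation that both boundary inclusions induce the fold $G\ast G\twoheadrightarrow G$, so the amalgam collapses to $G$) that the paper leaves implicit.
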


(Recently, a first concrete example of a collapsible non-PL triangulation of the $5$-ball for which its boundary is a combinatorial $4$-sphere
was constructed in \cite{AdiprasitoLutzTsuruga2015pre}; see also \cite{Benedetti2012pre} 
for an outline of the construction. This example is based on a triangulation of Mazur's contractible $4$-manifold.)

\section{Asymptotic complicatedness of barycentric subdivisions}
\label{sec:asymptotic}

Mesh refinements are often used in discrete geometry to force nice combinatorial properties, while at the same time 
maintaining the existing ones. The price to pay is of course an increase (linear, polynomial, or even exponential) in the computation.
 
A particularly effective refinement, from this point of view, is the ``barycentric subdivision'', as the following results suggest: 
\begin{compactenum}[(1)]
\item given an arbitrary PL ball, some iterated barycentric  subdivision of it is collapsible  \cite{AdiprasitoBenedetti2011pre};
\item for every PL sphere, some iterated barycentric subdivision of it is polytopal~\cite{AdiprasitoIzmestiev2013pre};
\item for every PL triangulation of any smooth manifold that has a handle decomposition of $c_i$ $i$-handles, some iterated 
         barycentric subdivision of it admits $(c_0, \ldots, c_d)$ as optimal discrete Morse vector~\cite{Benedetti2012pre}.
\end{compactenum}

In this section we focus on the \emph{average} discrete Morse vector, rather than on the smallest one. In \cite[p.~13]{BenedettiLutz2014} 
it was observed experimentally that the (observed) average number of critical cells in the random computation  of discrete Morse vectors 
can decrease for ``complicated'' triangulations (which we experienced for the examples \texttt{trefoil\_bsd}, \texttt{double\_trefoil\_bsd}, 
and \texttt{triple\_trefoil\_bsd}) after performing a single barycentric subdivision. Here we show that this experiment is misleading, 
in the sense that the observed average should rapidly increase after a finite number $\ell$ of barycentric subdivisions.\footnote{Of course, 
observing  this phenomenon experimentally may be difficult, due to (1) computational expense of treating barycentric subdivisions 
after the first or second iteration, (2) the possibility that the probabilities $p_1$, $p_2$, and $p_3$ determined by Lemma \ref{lem:pos} 
below are so small that one needs an extremely large $\ell$ to appreciate the effect.}

We show that after a constant number, say $\alpha$, of barycentric subdivisions (a number larger than~$1$, but universally bounded, 
i.e., independent of the $3$-complex chosen) every $3$-complex $C$ will contain a $2$-dimensional copy of the dunce hat $D$ 
as a subcomplex in any of its subdivided tetrahedra. As a consequence, we will see in Theorem~\ref{thm:exp} that the 
expected number of critical cells in a discrete Morse vector for the $(\ell+\alpha)$-th barycentric subdivision 
of any $3$-dimensional simplicial complex grows exponentially in $\ell$. 

\begin{figure}[tb]
\begin{center}
\begin{postscript}
\psfrag{D}{$D$}
\psfrag{E}{$E$}
\psfrag{C}{$C$}
\psfrag{K}{$K$}
\psfrag{T}{$T$}
\includegraphics[width=5.5cm]{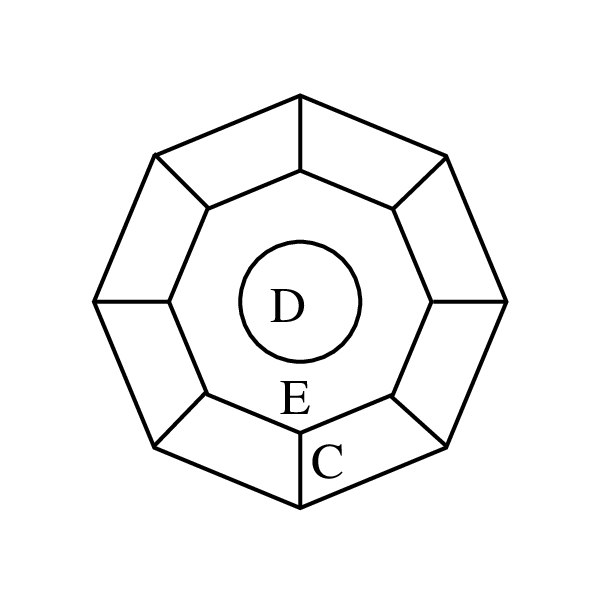}\hspace{7.5mm}\includegraphics[width=5.5cm]{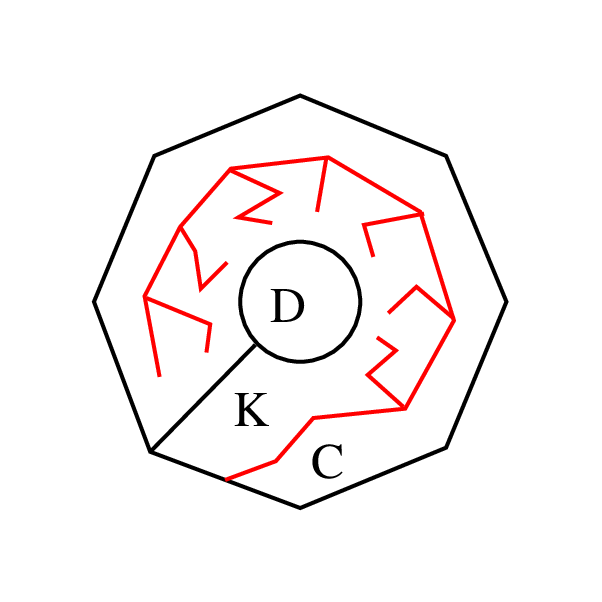}
\end{postscript}
\end{center}
\caption{The dunce hat $D$ sitting inside the $3$-ball $E$, ``shielded'' by $\partial E \times [0,1]$ in the $3$-ball~$C$ (left);
              a collapsing sequence that first empties out the collar $E \times [0,1]$ and then collapses $E$ to $D$, 
              leaving a bridge $K$ (right).}
\label{fig:dunce_in_ball}
\end{figure}

\begin{lem}\label{lem:pos}
There are universal constants $\alpha \in \mathbb{N}$ and $p_1, p_2, p_3 \in (0,1]$ such that
\begin{compactenum}[\rm (i)]
\item the $\alpha$-th iterated barycentric subdivision $\mathrm{sd}^{\alpha}\, T$ of the tetrahedron $T$ contains a (subdivided) 
         dunce hat as a subcomplex; 
\item and if\, $\Sigma\in \mathrm{sd}^{\alpha}\, T$ is any facet intersecting  $\partial\,\mathrm{sd}^{\alpha}\, T$ in dimension $2$, 
         then with probability $p_1>0$, the discrete Morse algorithm \texttt{random} collapses
         $\mathrm{sd}^{\alpha}\, T- \Sigma$ to a $2$-complex containing the subdivided dunce hat and $\partial\,\mathrm{sd}^{\alpha}\, T$;
\item the strategies \texttt{random-lex-first} and \texttt{random-lex-last}
          similarly collapse $\mathrm{sd}^{\alpha}\, T- \Sigma$ to a $2$-complex containing the dunce and hat and $\partial\,\mathrm{sd}^{\alpha}\, T$ 
          with positive probabilities $p_2$ and $p_3$ 
          (under the same restrictions, respectively).
\end{compactenum}
\end{lem}

\begin{proof}
A well known result in PL topology (cf.\ \cite{ZeemanBK}) is that any regular neighborhood of the dunce hat in $\mathbb{R}^3$ is a (PL) $3$-ball; 
moreover, any regular neighborhood contains the original complex in its interior, and collapses in the PL category onto the original complex. 
In other words, there is a simplicial $3$-ball $E$ that collapses onto (a subdivision of) the dunce hat $D$.

Let us ``collar'' $E$, that is, let us consider the $3$-ball
\[ C = E \cup \left (\, \partial E  \times [0,1] \, \right). \]
The ball $C$ is not simplicial anymore, but consists of the tetrahedra of $E$ along with triangular prisms $t\times\, [0,1]$ for each of the triangles $t$ 
of $\partial E$. We triangulate the prisms, e.g., by first subdividing the three square faces of each prism into four triangles each 
(we cone over the boundary $4$-gons) and then inserting a central vertex into each prism and cone over its triangulated boundary. 
We call the resulting triangulated $3$-ball $C'$; it contains in its interior the (triangulated ball $E$ that contains the) copy of the dunce hat $D$, 
``shielded'' by the collar $\partial E \times [0,1] $, where $\partial C'=\partial C$; see also Figure~\ref{fig:dunce_in_ball}.

Because $\partial E$ is two-dimensional, it is endo-collapsible. This implies that for any facet $\Sigma$ of $C'$ not in $E$
(in particular, for a facet $\Sigma$ intersecting $\partial C'$ in a boundary triangle), 
the complex $C' - \Sigma$ collapses onto 
$$F = \partial C' \cup K \cup  E,$$
 where $K$ is some complex of  dimension at most two
(that can be thought of as a bridge between the outer boundary $\partial C'$ of the whole ball $C'$ and the interior ball $E$; 
see Figure~\ref{fig:dunce_in_ball}); and this $F$ collapses in turn to 
$$G=\partial C' \cup K \cup  D.$$ 
In other words, we first empty out the collar $\partial E \times [0,1]$ and then collapse $E$ to $D$,
leaving over only the $2$-complex $K \cup  D$ (and the outside boundary $\partial C'$). This way, we ensure that there are no further collapses
that could collapse away $D$.

Let $T$ be a tetrahedron. Since $C'$ is a PL ball, by simplicial approximation (cf.~\cite{ZeemanBK}) it follows that 
there is some iterated barycentric subdivision $\mathrm{sd}^{\alpha}\, T$ of $T$ that also subdivides $C'$, 
i.e.\ there is a facewise linear homeomorphism $\varphi:C'\mapsto  \mathrm{sd}^{\alpha}\, T$.
With \cite[Thm.~3.5.4]{AdiprasitoBenedetti2011pre}, 
it follows that  $\varphi(C')=\mathrm{sd}^{\alpha}\, T$
collapses to $\varphi(G) \subset \mathrm{sd}^{\alpha}\, T$.

Furthermore, for $\alpha$ large enough, there is a labeling of the vertices of $\mathrm{sd}(\mathrm{sd}^{\alpha-1}\, T)$
such that it collapses to a triangulation of the dunce hat with the lex-first and lex-last orders. 
(As above, there is no loss in assuming that in the collapsing sequence some subdivision of $F$ appears 
as an intermediate step.)

In particular, the probability that any of the algorithms above collapses $\mathrm{sd}^{\alpha}\, T$ to the dunce hat is strictly positive. 
\end{proof}

\begin{rem}\label{rem:tau}
Since according to the proof of Lemma~\ref{lem:pos} we can choose the starting facet $\Sigma$ to intersect  $\partial\,\mathrm{sd}^{\alpha}\, T$ 
in any boundary triangle~$\tau$, we could, instead of $\Sigma$, initially use $\tau$ (as a free face) for an elementary collapse 
$(\tau,\Sigma)$ to enter $\mathrm{sd}^{\alpha}\, T$
as indicated in Figure~\ref{fig:dunce_in_ball} to collapse $\mathrm{sd}^{\alpha}\, T$ to
a $2$-complex containing the subdivided dunce hat and $\partial\,\mathrm{sd}^{\alpha}\, T-\tau.$
\end{rem}

Thus, if we have a $3$-complex $K$ with $N$ facets, by subdividing $K$ barycentrically $\alpha$ times, 
we obtain a $3$-complex that contains in its $2$-skeleton $N$ disjoint copies of the dunce hat (one per each tetrahedron of $K$).
To study how discrete Morse theory behaves on such a complex, we start with a simple lemma. 
In the following, let $|\beta(K)|$ denote the sum of the (unreduced) Betti numbers of a complex $K$.

\begin{lem}\label{lem:embed}
Let $K$ be a $2$-dimensional simplicial complex that contains $r$ disjoint copies of the dunce hat. Then $K$ does not admit 
a discrete Morse function with less than $2r+|\beta(K)|$ critical cells.
\end{lem}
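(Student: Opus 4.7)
The plan is to exploit the non-collapsibility of each dunce hat locally and then combine these local obstructions via the algebraic structure of the Morse complex on $K$.

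Let $f$ be a discrete Morse function on $K$ with associated matching $M$, and write $c_j$ for the number of critical $j$-cells. The first step is to restrict $M$ to each $D_i$. Since a subdigraph of an acyclic digraph is acyclic, $M|_{D_i}$ is itself an acyclic matching on $D_i$. Moreover, because $K$ is $2$-dimensional and $D_i$ is a subcomplex, every triangle of $D_i$ is either critical in $M$ or matched downward with one of its own edges, which necessarily also lies in $D_i$. Hence the critical $2$-cells of $M|_{D_i}$ viewed as a Morse matching on $D_i$ are exactly the critical $2$-cells of $M$ that happen to lie in $D_i$.

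Next, I would invoke that the dunce hat is contractible but not collapsible, so any discrete Morse matching on $D_i$ has at least one critical cell in each dimension. Indeed, if $c_2=0$ the Morse reduction of $D_i$ would be a $1$-complex of Euler characteristic $1$, necessarily a tree, hence collapsible, contradicting the non-collapsibility of $D_i$; that $c_0\ge 1$ is the weak Morse inequality, and $c_1\ge 1$ follows from Euler. Applied to $M|_{D_i}$ and summed over $i$, this yields at least $r$ critical $2$-cells of $M$ inside $\bigsqcup_i D_i$.

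For the global bound, I would pass to the Morse chain complex. Rank-nullity applied to this complex (using $\partial_0^M = 0$ and $\partial_3^M = 0$ for a $2$-complex) yields
\[ c_j \;=\; \beta_j(K) + \operatorname{rank}\partial_j^M + \operatorname{rank}\partial_{j+1}^M, \]
and consequently $c \;=\; |\beta(K)| + 2(\rho_1 + \rho_2)$ with $\rho_j := \operatorname{rank}\partial_j^M$. The lemma therefore reduces to the algebraic inequality $\rho_1 + \rho_2 \geq r$.

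The main obstacle is this final step. For each $D_i$, the same rank-nullity argument applied inside the restricted Morse complex of $M|_{D_i}$ (which computes $H_\ast(D_i)$) yields at least two units of boundary rank per dunce hat \emph{locally}; I would then need to transfer this excess into the global boundary ranks $\rho_1,\rho_2$ of the full matching $M$. The subtlety is that ``external'' matched pairs $(\sigma,\tau)\in M$ with $\sigma\in D_i$ and $\tau\notin \bigsqcup_i D_i$ can in principle disrupt this transfer, since $V$-paths from a critical $2$-cell inside $D_i$ may exit $D_i$ through such external pairs. I would handle this by a reduction argument: show that $M$ can be successively modified by local swaps (replacing an external pair $(\sigma,\tau)$ together with a neighbouring internal pair by two internal pairs) without increasing the total critical cell count, arriving at a matching $M'$ with $c(M')\le c(M)$ and no external pairs. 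For such a split $M' = M'|_D \sqcup M'|_{K\setminus D}$, the two restricted matchings compute $H_\ast(D)$ and the relative $H_\ast(K,D)$ respectively; combining their Morse bounds and computing $|\beta(K,D)| = |\beta(K)| + r - 2p$ from the long exact sequence (where $p$ is the number of components of $K$ meeting $D$ and we use $H_j(D)=0$ for $j\ge 1$) yields the desired bound. The technical heart of the proof is verifying that the external-eliminating swaps always exist and never raise $c$; the intuition is that external pairs are ``wasteful'' because they tie up a cell of $D$ that is already forced to be non-trivial by the dunce-hat Morse floor together with a cell outside $D$ that could otherwise shorten the complement's Morse vector.
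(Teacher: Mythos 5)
Your opening moves are sound: restricting the matching to each $D_i$ and observing that a triangle of $D_i$ can only be matched with one of its own edges does force at least one critical $2$-cell of $M$ inside each dunce hat (since no critical $2$-cells would make $D_i$ collapse onto a contractible $1$-complex, i.e.\ a tree, contradicting its non-collapsibility), hence $c_2\ge r$; and the rank--nullity identity $c=|\beta(K)|+2(\rho_1+\rho_2)$ correctly isolates what remains, namely $\rho_1+\rho_2\ge r$ (for which it would suffice to show $c_2\ge r+\beta_2(K)$, since $\rho_2=c_2-\beta_2$ --- the real content being that the $r$ triangles forced by the dunce hats are not already ``used up'' carrying $H_2(K)$). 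But the step you yourself identify as the technical heart --- eliminating every external pair $(\sigma,\tau)$ with $\sigma\in D_i$ and $\tau\notin\bigsqcup_i D_i$ by local swaps that preserve acyclicity and never increase the number of critical cells --- is asserted, not proved, and it is far from routine. Rematching an edge $e\in D_i$ away from an external triangle triggers a cascade (the cofaces of $e$ inside $D_i$ and the other faces of $\tau$ may all be matched already), every rematch must be checked against the creation of directed cycles, and if such swaps always existed you would in effect have shown that a (near-)optimal acyclic matching can always be split along this subcomplex --- a strong structural statement that discrete Morse theory does not supply for free. As written, the argument is incomplete at its decisive step.

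For comparison, the paper sidesteps all global bookkeeping by a double induction on $r$ and on the number of faces of $K$, peeling off the facet $\sigma$ on which an optimal $f$ is maximal: if $\sigma$ is collapsed away through a free edge $\rho$, then $\rho$ cannot lie in any dunce hat (every edge of the dunce hat lies in two or three triangles), so $r$ and $\beta$ are unchanged; if $\sigma$ is critical and lies in some $D_i$, deleting it raises $\beta_1$ by one while destroying at most one copy of the dunce hat; and if $\sigma$ is critical and meets no $D_i$, deleting it lowers $|\beta|$ by at most one. In each case the right-hand side $2r+|\beta|$ drops by at most the number of critical cells spent, and induction closes the argument. Note that the second case is precisely the paper's (inductive) substitute for the inequality $c_2\ge r+\beta_2(K)$ that your approach still owes: it encodes the fact that a critical triangle inside a dunce hat contributes to $\beta_1$ rather than to a $2$-cycle. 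If you want to keep your global framework, that is the statement you should aim to prove directly instead of the matching-swap lemma.
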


\begin{proof}
We prove the claim by double induction, on $r$ and on the number of faces (``size'') of $K$. The case $r=0$ 
is a straightforward consequence of the Morse inequalities. Thus, let $r\ge 1$ and let us assume the claim is proven 
for all $2$-complexes $K'$ with at most $r$ disjoint embeddings of the dunce hat and less faces than $K$. 
Let $f$ be any optimal discrete Morse function, and let $\sigma$ be the facet at which $f$ attains its maximum. 
There are three cases to consider: 
\begin{compactitem}
\item $\sigma$ is not critical. In this case there is a free face $\rho$ of $\sigma$ such that $f(\rho)=f(\sigma)$. 
         Note that the face $\rho$ cannot belong to a copy of the dunce hat, because in the dunce hat 
         every edge belongs to two or three triangles. The simultaneous removal of both $\rho$ and $\sigma$ 
          is then an elementary collapse; let $K'$ be the complex obtained. $K'$ has the same homology of $K$, 
          and it still has $r$ copies of the dunce hat embedded. The claim follows by applying the inductive assumption to $K'$.
\item $\sigma$ is a critical face and belongs to an embedding $D$ of the dunce hat. 
         In this case, we delete~$\sigma$. By a simple cellular homology argument, one can show that 
         $\beta_1(D-\sigma)=\beta_1(D)+1$ (and $\beta_i(D-\sigma)=\beta_i(D)$ otherwise). 
         Therefore, we obtain that $\beta_1(K-\sigma)=\beta_1(K)+1$ (and $\beta_i(K-\sigma)=\beta_i(K)$ otherwise). 
         Hence $|\beta(K-\sigma)|=\beta(K)|+1$, and $(K-\sigma)$ is a $2$-complex containing at least $r-1$ 
         copies of the dunce hat. The claim follows by applying the inductive assumption to $K'=K-\sigma$.
\item $\sigma$ is a critical face of $K$ not intersecting any copy of the dunce hat.
         We split this case into two subcases. Either the critical cell we are going to remove
         is contained in a homology $2$-cycle or not. In the first case,
         $\beta(K-\sigma)=(\beta_0(K),\beta_1(K),\beta_2(K)-1)$, whereas in the second case,  
         $\beta(K-\sigma)=(\beta_0(K),\beta_1(K)+1,\beta_2(K))$. 
         Together, we have $|\beta(K-\sigma)|\geq |\mbox{$\beta(K)|-1$}$. 
         Again, by applying the inductive assumption to $K'=K-\sigma$,
         the restriction of $f$ to $K-\sigma$ has no less than $2r+|\beta(K)|-1$ 
         critical cells. 
         The claim follows because $f$ has one more critical cell on $K$ than the  restriction of $f$ on $K-\sigma$.\qedhere 
\end{compactitem}
\end{proof}

For the next result, we need to fix some notation first. Given a simplicial complex $K$, we use $f_i(K)$ to denote the number of $i$-dimensional faces of $K$. 
We denote by $X_1(K),\ X_2(K)$ resp.\ $X_3(K)$ the random variables for the number of critical faces of the discrete Morse strategies \texttt{random}, \texttt{random-lex-first} and \texttt{random-lex-last} on $K$, respectively, and let $\mathbb{E}_1(K),\ \mathbb{E}_2(K)$ resp.\ $\mathbb{E}_3(K)$ 
denote the associated expected values.
We write $X_{\ast}(K)$ resp.\ $\mathbb{E}_{\ast}(K)$ when a statement applies to all three strategies.

Recall that we write $f=\varOmega(g)$ resp.\ $f=O(g)$ for real valued functions $f:\mathbb{N}\rightarrow \mathbb{R}$ 
and $g:\mathbb{N}\rightarrow \mathbb{R}$ if there is a $c>0$ such that for all $x\in \mathbb{R}$ large enough, 
$f(x)\ge c g(x)$ resp.\ $f(x)\le c g(x)$. We write $f=\varTheta(g)$ if $f=\varOmega(g)$ and $f=O(g)$.

\begin{thm}\label{thm:exp}
Let $K$ be a simplicial $3$-complex and let $\tilde{p}_i$, $i\in\{1,2,3\}$, be the probability 
(see below), that guarantees to ``find'' the dunce hat~$D$
in the $\alpha$-th barycentric subdivision $\mathrm{sd}^{\alpha}\, T$ of any of the tetrahedra $T$ of~$K$.
Then, for all $\ell\ge 0$ and any $m\geq 0$,
\[\mathbb{P}[X_i(\mathrm{sd}^{\ell+\alpha} K)\le m]\le (1-p_i)^{24^\ell f_3( K)-m},\quad\mbox{for}\quad i\in\{1,2,3\}.\]
Furthermore, $$\log \mathbb{E}_{\ast}(\mathrm{sd}^\ell K)= \varOmega(\ell).$$ 
\end{thm}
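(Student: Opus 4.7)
The plan is to lift the local statement of Lemma~\ref{lem:pos} to a global tail bound on the number of critical cells via an independence argument across subdivided tetrahedra. Since the barycentric subdivision of a $3$-simplex contains $4!=24$ tetrahedra, $\mathrm{sd}^{\ell} K$ has exactly $N:=24^{\ell} f_{3}(K)$ tetrahedra; label them $T_{1},\dots ,T_{N}$. For each $j$, let $A_{j}$ be the event that the strategy under consideration, when executed on $\mathrm{sd}^{\ell+\alpha}K$, makes choices inside $\mathrm{sd}^{\alpha} T_{j}$ that collapse this block to a $2$-complex containing the shielded dunce hat produced in Lemma~\ref{lem:pos}. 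Because that construction encloses the dunce hat in an outer collar sitting strictly inside $T_{j}$, the collapsing moves producing $A_{j}$ are supported in the interior of $\mathrm{sd}^{\alpha} T_{j}$ and can be carried out without interacting with the other tetrahedra. Hence $\mathbb{P}[A_{j}]\ge p_{i}$ and one may arrange the family $\{A_{j}\}$ to be mutually independent.

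I would then convert a positive count of dunce hats into a lower bound on the number of critical cells. Let $Y=\#\{j:A_{j}\text{ occurs}\}$; the $Y$ dunce hats are pairwise disjoint and survive disjointly inside the $2$-complex left over after the algorithm has exhausted all its $3$-dimensional collapses, so Lemma~\ref{lem:embed} applied to that $2$-skeleton forces $X_{i}\ge 2Y\ge Y$. Thus $\{X_{i}\le m\}\subseteq\{Y\le m\}$, i.e.\ at least $N-m$ of the $A_{j}$'s must fail. Identifying the $A_{j}$'s with independent Bernoulli trials of success probability $p_{i}$ then gives the claimed bound $\mathbb{P}[X_{i}\le m]\le (1-p_{i})^{N-m}$.

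For the ``Furthermore'' clause I would set $m=\lfloor N/2\rfloor$ in the tail inequality, obtaining $\mathbb{P}[X_{\ast}\ge N/2]\ge 1-(1-p_{\ast})^{N/2}$, so that
\[ \mathbb{E}_{\ast}(\mathrm{sd}^{\ell+\alpha}K)\;\ge\;\frac{N}{2}\bigl(1-(1-p_{\ast})^{N/2}\bigr)\;=\;\varOmega(24^{\ell}). \]
Taking logarithms and absorbing the fixed shift by $\alpha$ into the implicit constant yields $\log \mathbb{E}_{\ast}(\mathrm{sd}^{\ell}K)=\varOmega(\ell)$.

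The main obstacle is to justify the per-tetrahedron independence of the $A_{j}$'s: all three strategies make \emph{global} choices at each step --- \texttt{random} samples a free face uniformly from the whole complex, while the two lex variants compare labels across tetrahedra after a single global relabeling --- so the claim that the behavior inside $\mathrm{sd}^{\alpha}T_{j}$ decouples from what happens elsewhere is not automatic. I would handle this by conditioning on the order in which the algorithm first enters each block and exploiting the symmetry of the uniform and lex laws restricted to a single $T_{j}$: the outer collar of Lemma~\ref{lem:pos} keeps the free faces available inside $\mathrm{sd}^{\alpha}T_{j}$ disjoint from those in the other blocks, so once a collapse inside $T_{j}$ is committed, its conditional continuation there agrees with the one-tetrahedron model of Lemma~\ref{lem:pos} and does not see the contents of any $T_{j'}$ with $j'\neq j$. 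This would give the mutual independence (or at least the negative association) needed to validate the tail bound above.
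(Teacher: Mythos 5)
Your proposal is correct and follows essentially the same route as the paper: per-tetrahedron copies of the shielded dunce hat from Lemma~\ref{lem:pos}, the conversion of a dunce-hat count into a critical-cell count via Lemma~\ref{lem:embed}, and an independent-Bernoulli tail bound over the $24^{\ell}f_{3}(K)$ blocks (the paper gets the expectation bound by linearity, $\mathbb{E}_i \ge c + p_i\cdot 24^{\ell}f_3(K)$, rather than by your Markov-type argument, but this is a cosmetic difference). You are in fact more explicit than the paper about the one delicate point, namely why the global random/lex choices decouple across the shielded blocks, which the paper's proof leaves implicit.
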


\begin{proof}
Let $K$ be a $3$-dimensional simplicial complex $K$ with face vector
$f=(f_0,f_1,f_2,f_3)$ and $k$ be a nonnegative integer, then $f_3(\mathrm{sd}^k K)=24^k f_3(K)$.

If we let, for large enough~$\alpha$, 
the algorithm \texttt{random} (or the algorithms \texttt{random-lex-first} and \texttt{random-lex-last}
after a suitable relabeling of the vertices) run on $\mathrm{sd}^\alpha K$,
then eventually we enter into every subdivided tetrahedron $\mathrm{sd}^{\alpha}\, T$ of any of the initial tetrahedra $T$ of~$K$,
either via a free boundary triangle $\tau$ of $\partial\,\mathrm{sd}^{\alpha}\, T$ or by removing a tetrahedron $\Sigma$
(only if no free faces are available) from $\mathrm{sd}^{\alpha}\, T$. As argued above in the proof of Lemma~\ref{lem:pos}
and in Remark~\ref{rem:tau},
for any choice of a first~$\tau$ there is a sequence of collapses that empties out $\mathrm{sd}^{\alpha}\, T$,
but leaves (a subdivision of) the dunce hat~$D$. If the starting facet $\Sigma$ is picked (randomly)
from (the subdivision of the image of) $C'\setminus E$ we can also empty out  $\mathrm{sd}^{\alpha}\, T$ and leave~$D$.

However, the run of the algorithm \texttt{random} need not target the subdivided tetrahedra one after each other, 
but will jump within $\mathrm{sd}^\alpha K$. Yet, within each $\mathrm{sd}^{\alpha}\, T$
we still do have positive conditional probabilities to get stuck with $D$, depending on 
how many triangles of $\partial\,\mathrm{sd}^{\alpha}\, T$ become free faces at different stages of our run.
There are only finitely many boundary triangles of $\partial\,\mathrm{sd}^{\alpha}\, T$,
and thus there are only finitely many situations to consider for possible collapsing sequences on $\mathrm{sd}^{\alpha}\, T$.
It is important to point out though that whatever goes on beyond the boundary $\partial\,\mathrm{sd}^{\alpha}\, T$ during a run,
this only affects the (conditional) probability of finding a dunce hat $D$ inside $\mathrm{sd}^{\alpha}\, T$,
but does not block the option to \emph{actually} find a dunce hat,
i.e., a run is not independent on the individual $\mathrm{sd}^{\alpha}\, T$, but independent enough
to guarantee our result.
For simplicity, we therefore let $\tilde{p}_i>0$, $i\in\{1,2,3\}$ be respective (lower bounds on the conditional) probabilities
to find a dunce hat.

Let $K$ have an optimal discrete Morse function with $c$ critical cells, 
then for any $\ell\geq 1$ also $\mathrm{sd}^{\ell} K$ has an optimal discrete Morse function 
with at most $c$ critical cells, and
$$\mathbb{P}[X_i(\mathrm{sd}^{\ell+\alpha} K)\le m] \leq\mathbb{P}[X_i(\mathrm{sd}^{\ell+\alpha} K)\le c+m]
\leq \tilde{p}_i^{\,m} \cdot (1-\tilde{p}_i)^{24^\ell f_3( K)-m}  \leq (1-\tilde{p}_i)^{24^\ell f_3( K)-m},$$
for $i\in\{1,2,3\}$, where $m$ dunce hats are picked up (by Lemma~\ref{lem:embed}) 
in the $f_3(\mathrm{sd}^{\ell} K)=24^\ell f_3( K)$ tetrahedra of $\mathrm{sd}^{\ell} K$ 
(which is refined further to $\mathrm{sd}^{\ell+\alpha} K$
to host the dunce hats by Lemma~\ref{lem:pos}), while the other $24^\ell f_3( K)-m$ dunce hats are not picked up. 

For the expected number of critical cells we then have 
$$\mathbb{E}_i(\mathrm{sd}^{\ell+\alpha} K)\geq  c + \tilde{p}_i\cdot f_3(\mathrm{sd}^{\ell} K)=c+\tilde{p}_i\cdot 24^\ell f_3( K),$$
and thus $\log \mathbb{E}_{\ast}(\mathrm{sd}^\ell K)= \varOmega(\ell)$. 
\end{proof}

The bound provided by Theorem \ref{thm:exp} is asymptotically tight:

\begin{cor}\label{cor:highdim}
Let $K$ be any simplicial complex of dimension $d \ge 3$. Then 
\[\log \mathbb{E}_\ast(\mathrm{sd}^{\ell} K)=\varTheta (\ell).\]
\end{cor}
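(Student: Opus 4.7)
The plan is to establish both bounds: the upper bound by a face-counting argument, and the lower bound by extending Theorem \ref{thm:exp} from $3$-complexes to $d$-complexes with $d \ge 3$.

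For the upper bound, a single barycentric subdivision multiplies the number of facets of a $d$-complex by $(d+1)!$, and more generally inflates the full $f$-vector by at most a dimension-dependent constant times $(d+1)!$. Iterating, $|\mathrm{sd}^\ell K| \le c_d^{\,\ell} \cdot |K|$ for a suitable $c_d > 1$. Since every discrete Morse vector is coordinate-wise bounded by the $f$-vector of the underlying complex, this yields $\mathbb{E}_\ast(\mathrm{sd}^\ell K) \le c_d^{\,\ell} \cdot |K|$ and hence $\log \mathbb{E}_\ast(\mathrm{sd}^\ell K) = O(\ell)$.

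For the lower bound, since $\dim K \ge 3$, the complex $K$ contains at least one tetrahedron, so $f_3(K) \ge 1$. Under iterated barycentric subdivision, each tetrahedron of $K$ gives rise to $24^\ell$ sub-tetrahedra in the $3$-skeleton of $\mathrm{sd}^\ell K$. After $\alpha$ further subdivisions, Lemma \ref{lem:pos} places a shielded copy of the dunce hat in the interior of each of these sub-tetrahedra, and each such copy contributes, with probability at least $p_\ast := \min\{p_1, p_2, p_3\} > 0$, extra critical cells to the output of any of the three random strategies. The argument of Theorem \ref{thm:exp} then carries through and gives $\mathbb{E}_\ast(\mathrm{sd}^{\ell+\alpha} K) \ge p_\ast \cdot 24^\ell \cdot f_3(K) = \varOmega(24^\ell)$, so $\log \mathbb{E}_\ast(\mathrm{sd}^\ell K) = \varOmega(\ell)$. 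Combined with the matching upper bound, we conclude $\log \mathbb{E}_\ast(\mathrm{sd}^\ell K) = \varTheta(\ell)$.

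The main obstacle is to justify that the dunce hats continue to force critical cells even when sitting in the $2$-skeleton of a $d$-complex with $d>3$, where higher-dimensional cells are processed before the algorithm reaches dimension two. The essential point is that the shielding construction of Lemma \ref{lem:pos} is strictly local to a single sub-tetrahedron, so the contributions across the $24^\ell f_3(K)$ disjoint sub-tetrahedra are independent. One formalizes this by extending Lemma \ref{lem:embed} to $d$-complexes: a $d$-complex containing $r$ disjoint shielded copies of the dunce hat admits no discrete Morse function with fewer than $2r + |\beta(K)|$ critical cells, since at the moment the algorithm exposes the interior of any such sub-tetrahedron, the obstruction to collapsibility of the dunce hat (no free edges) becomes binding. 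Summing these independent contributions over all $24^\ell f_3(K)$ sub-tetrahedra yields the required exponential lower bound on the expectation.
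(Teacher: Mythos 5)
Your upper bound is fine and is exactly the paper's (the expected number of critical cells is at most the total face count, which grows like $((d+1)!)^{\ell}$). The lower bound, however, has a genuine gap. You place the shielded dunce hats inside the $24^{\ell}f_3(K)$ sub-tetrahedra of the $3$-skeleton and assert that the shielding of Lemma~\ref{lem:pos} is ``strictly local to a single sub-tetrahedron'' and therefore carries over. When $d>3$ a $3$-face of $\mathrm{sd}^{\ell}K$ is not a facet: every $2$-cell of the dunce hat and of its collar is also a face of $3$-cells coming from the subdivisions of the higher-dimensional simplices incident to that $3$-face, and the $3$-dimensional collar $\partial E\times[0,1]$ gives no control whatsoever over how the algorithm matches those cells while it is still emptying out dimensions $d,d-1,\dots,4$. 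Concretely, your proposed extension of Lemma~\ref{lem:embed} --- that a $d$-complex containing $r$ disjoint copies of the dunce hat admits no discrete Morse function with fewer than $2r+|\beta(K)|$ critical cells --- is false: the cone $v\ast D$ over the dunce hat $D$ is a collapsible $3$-complex containing $D$ as a subcomplex, so it admits a discrete Morse function with a single critical cell. The dunce hat only forces critical cells once it sits inside a purely $2$-dimensional remnant, and ensuring that this happens with positive probability is precisely what the shielding must guarantee.

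The paper avoids this by doing the shielding in the top dimension: it thickens the $3$-ball $E\supset D$ to a $d$-ball $E^d=E\times I^{d-3}$, collars it via $C^d=E^d\cup(\partial E^d\times[0,1])$, and places $C^d$ in the interior of a $d$-simplex $\sigma_d$, so that after a universal number $\alpha_d$ of barycentric subdivisions every \emph{facet} of $\mathrm{sd}^{\ell}K$ contains such a shielded copy and the positive-probability collapse onto $D$ of Lemma~\ref{lem:pos} can be rerun verbatim at dimension $d$. The count then runs over the $(((d+1)!)^{\ell}$-many) facets rather than over $3$-faces, and Theorem~\ref{thm:exp} applies. To repair your argument you would need to replace the $3$-dimensional shielding by this $d$-dimensional one (or otherwise prove that, with positive probability, the top-down collapsing leaves each of your $3$-balls intact as a free-standing $3$-dimensional piece), which is not a cosmetic change.
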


\begin{proof}
First, we have to justify that we can consider any dimension $d \ge 3$.
For this, we start out with the $3$-dimensional ball $E$ of Lemma~\ref{lem:pos}
that contains the dunce hat $D$, we take the cross product
$E^d=E\times I^{d-3}$ to obtain a $d$-ball that we again shield via
\[ C^d = E^d \cup \left (\, \partial E^d \, \times \, [0,1] \, \right)\]
and then place $C^d$ in the interior of a $d$-simplex $\sigma_d$.
Again, there is a universal constant $\alpha_d$ for each dimension $d$
such that $\mathrm{sd}^{\alpha_d} \sigma_d$ contains a triangulated copy of $C^d$,
where $C^d$ collapses onto $D$ (with any of the three strategies) with a positive probability $p$.

By Theorem \ref{thm:exp}, we have $\log \mathbb{E}_{\ast}(\mathrm{sd}^{\ell} K)= \varOmega(\ell)$. 
Furthermore, it is clear that $\log \mathbb{E}_{\ast}(\mathrm{sd}^{\ell} K)$ is bounded above 
by $\log f(\mathrm{sd}^{\ell} K)\le\log( 2^d ((d+1)!)^{\ell} f_d(K))= C\log \ell$, where $f(\cdot)$ denotes the total number 
of faces of a simplicial complex and $C$ is a constant.
\end{proof}

A similar behavior for the randomized algorithms with respect to simplicial polytopes can be observed 
when we increase the dimension of the polytopes concerned.

\begin{thm}
For each simplicial polytope $P$ of dimension $d$, 
\[\mathbb{E}_\ast(P)=  \varOmega(d).\]
\end{thm}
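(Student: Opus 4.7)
The plan is to argue by induction on $d$, paralleling the spirit of Corollary~\ref{cor:highdim} but with the dimension of the polytope (rather than iterated barycentric subdivision) playing the role of the ``amplification parameter''. The base case $d=3$ is immediate: the boundary of any simplicial $3$-polytope is a $2$-sphere, which forces at least two critical cells by the Morse inequalities, matching $\varOmega(3)$ up to constants. For the inductive step, I would use the fact that for every vertex $v$ of a simplicial $d$-polytope $P$, the link $\mathrm{lk}(v,\partial P)$ is a simplicial $(d-2)$-sphere which is itself combinatorially the boundary of a simplicial $(d-1)$-polytope (obtained, for instance, by polarity applied to a polytopal realization of $\overline{\mathrm{st}}(v,\partial P)$).

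The heart of the argument would be a stochastic comparison: once the random algorithm on $\partial P$ begins processing faces inside $\overline{\mathrm{st}}(v,\partial P)$, the resulting pattern of collapses induces a random sequence of pairings on $\mathrm{lk}(v)$ which, conditioned on the state of the algorithm at the moment the star is first ``entered'', stochastically dominates a valid random run of the corresponding algorithm (\texttt{random}, \texttt{random-lex-first}, or \texttt{random-lex-last}) on $\mathrm{lk}(v)$ alone. By the inductive hypothesis applied to the $(d-1)$-polytope whose boundary equals $\mathrm{lk}(v)$, this sub-process contributes $\varOmega(d-1)$ critical cells in expectation. Combined with the critical $(d-1)$-face that is unavoidable at the very outset of processing any simplicial $(d-1)$-sphere, the total is $\varOmega(d)$.

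The main technical obstacle is making the stochastic comparison rigorous. Pairings chosen outside $\overline{\mathrm{st}}(v,\partial P)$ can ``spill into'' the star through shared $(d-2)$-faces, so the process induced on $\mathrm{lk}(v)$ need not coincide with a fresh run of the algorithm there. To circumvent this, one could either prove a monotonicity statement asserting that such external interference can only \emph{increase} the critical-cell count within the star, or average over a uniformly random choice of the vertex $v$ (of which there are at least $d+1$) and argue that with positive probability at least one vertex is ``clean enough'' for the inductive bound to apply in full.

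An alternative, more direct route would mirror the proofs of Theorem~\ref{thm:exp} and Lemma~\ref{lem:embed}: identify within $\partial P$ a nested family of combinatorial obstacles whose aggregate complexity (measured, say, by the sum of Betti numbers of a suitable intermediate subcomplex) grows linearly in $d$, and show that each obstacle contributes, with probability bounded away from zero, at least one extra critical cell to the discrete Morse vector. Linearity of expectation would then yield $\varOmega(d)$.
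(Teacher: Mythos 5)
Your main route (induction on $d$ via vertex links) has a genuine gap, and it is exactly the one you flag yourself: the stochastic comparison between the pairing that a run on $\partial P$ induces on $\mathrm{lk}(v)$ and a genuine run of the same algorithm on $\mathrm{lk}(v)$ is not established, and neither of your proposed repairs (a monotonicity statement, or averaging over the choice of $v$) is carried out or obviously true. Pairings made outside $\overline{\mathrm{st}}(v)$ determine which faces of the star are free when the star is entered, so the induced process on the link is not a run of the algorithm there, and cells that are critical for the induced link process need not correspond to critical cells of the ambient run; without a proof that interference can only increase the count, the inductive step does not go through. (The base case is harmless, since $\varOmega(d)$ is asymptotic, but the increment $E_d\ge E_{d-1}+c$ is precisely what remains unproved.)

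The paper's actual argument is much more direct and sidesteps links entirely: a facet of a simplicial $d$-polytope is a $(d-1)$-simplex on $d$ vertices, hence contains \emph{every} simplicial complex on at most $d$ vertices as a subcomplex. Taking $T$ a tetrahedron, $\mathrm{sd}^{\ell}T$ has at most $4\cdot(4!)^{\ell}$ vertices, so for $\ell=\lfloor\log_{4!}(d/4)\rfloor$ it embeds in a single facet of $P$; it contains $(4!)^{\ell}=\varTheta(d)$ tetrahedra, hence (after the shielding of Lemma~\ref{lem:pos} and Corollary~\ref{cor:highdim}) $\varTheta(d)$ disjoint shielded dunce hats, each caught with probability bounded away from zero, and Theorem~\ref{thm:exp} plus linearity of expectation gives $\mathbb{E}_\ast(P)=\varOmega(d)$. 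Your ``alternative, more direct route'' is in exactly this spirit --- a linear-in-$d$ family of obstacles each contributing a critical cell with positive probability --- but it is missing the one concrete ingredient that makes it work: identifying the obstacles as dunce hats packed into a subdivided tetrahedron inside a single facet, with the facet's $d$ vertices supplying the linear growth. As written, neither of your two routes constitutes a proof.
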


\begin{proof}
The facets of a simplicial $d$-polytope $P$ are $(d-1)$-dimensional simplices that have $d$ vertices each,
and in turn contain any simplicial complex with up to $d$ vertices as a subcomplex.
Let $T$ be the $3$-dimensional tetrahedron, then the $\ell$-th barycentric subdivision
$\mathrm{sd}^\ell T$ consists of $(4!)^{\ell}$ tetrahedra and has at most $4\cdot(4!)^{\ell}$ vertices.
Therefore, if $\ell=\lfloor \log_{4!} \frac{d}{4} \rfloor$, then $P$ contains $\mathrm{sd}^\ell T$.
The assertion now follows from Theorem~\ref{thm:exp} (using appropiate shielding as in the proof of Corollary~\ref{cor:highdim}).
\end{proof}

\noindent
\textbf{Acknowledgments:} We are grateful to the  anonymous referees for valuable comments that greatly helped to improve the presentation of the paper.

\section*{Appendix: Random-lex-first and random-lex-last strategies}
\label{sec:lex_rev_lex}

In \cite{BenedettiLutz2014}, the \texttt{random} discrete Morse strategy was used to search for small discrete Morse vectors for various input.
As pointed out in the previous section,  this random procedure, and also the random versions 
 \texttt{random-lex-first} and \texttt{random-lex-last} of  the deterministic strategies  \texttt{lex-first} and \texttt{lex-last} of  \cite{BenedettiLutz2014},
 respectively,  pick up exponentially many critical cells asymptotically almost surely. 
 Yet, on rather huge examples the heuristics often still are successful in finding good or even optimal discrete Morse vectors --- with the new
 \texttt{random-lex-last} strategy working significantly better than the other implementations; cf.\ Table~\ref{tbl:lex_rev_lex}.
 
As a (non-trivial) testing ground for discrete Morse (heuristical) algorithms,
a library of 45 triangulations was provided in \cite{BenedettiLutz2014}.
For 39 out of the 45 examples, optimal discrete Morse
vectors were found in  \cite{BenedettiLutz2014} with the discrete Morse strategy \texttt{random} 
or with the \texttt{lex-first} and \texttt{lex-last} strategies.
For~4~of the 6 open cases,
\begin{compactitem}
\item \texttt{hyperbolic\_dodecahedral\_space}, (1,4,4,1),
\item \texttt{non\_PL}, (1,0,0,2,2,1),
\item \texttt{S2xpoincare}, (1,2,3,3,2,1),
\item \texttt{contractible\_vertex\_homogeneous}, (1,0,0,4,8,4,0,0,0,0,0,0),
\end{compactitem}
we miss appropriate lower bounds to show optimality of the found discrete Morse vectors.
In the case of the $3$-ball triangulation \texttt{knot}, (1,1,1,0) was the best vector obtained in \cite{BenedettiLutz2014},
while Lewiner in \cite{Lewiner2005} claims to have found (1,0,0,0).
For the example \texttt{triple\_trefoil\_bsd}, (1,1,1,1) was reached in \cite{BenedettiLutz2014}.

\begin{thm}
The triangulated $3$-sphere example \texttt{triple\_trefoil\_bsd} has the perfect discrete Morse vector (1,0,0,1)
in its discrete Morse spectrum.
\end{thm}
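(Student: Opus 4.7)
The triangulation \texttt{triple\_trefoil\_bsd} is a simplicial $3$-sphere, so the Morse inequalities force any discrete Morse vector to dominate the Betti vector $(1,0,0,1)$ componentwise. Consequently, to establish the claim it suffices to exhibit a single monotone discrete Morse function on \texttt{triple\_trefoil\_bsd} whose critical vector equals $(1,0,0,1)$: such a vector is automatically optimal, and by Definition~\ref{def:DMS} it lies in the discrete Morse spectrum.

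The plan is computational. First I would read the facet list of \texttt{triple\_trefoil\_bsd} from the online library \cite{BenedettiLutz_LIBRARY} and build its Hasse diagram. Then I would run the \texttt{random-lex-last} heuristic introduced in the Appendix — which our experiments show clearly outperforms both \texttt{random} and \texttt{random-lex-first} — for a large number of independent trials: in each trial, shuffle the vertex labels uniformly at random, and greedily produce a Morse matching by always pairing along the lexicographically last available free face, declaring a cell critical only when no free pair exists. By construction each completed run yields an acyclic matching, equivalently a monotone discrete Morse function in the sense of Definition~\ref{def:montone}, whose critical vector is the recorded output.

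The proof then concludes as soon as one of these trials returns $(1,0,0,1)$: the explicit Morse matching produced by that run is stored and serves as a deterministic certificate, which one verifies independently by checking (i) that the induced pairing on the Hasse diagram is acyclic, and (ii) that exactly two cells (one vertex and one tetrahedron) are left unmatched. Once verified, the probabilistic origin of the matching is irrelevant; the certificate alone places $(1,0,0,1)$ in the spectrum.

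The main obstacle is that \texttt{triple\_trefoil\_bsd} is notoriously hard: the knotted $1$-skeleton forces the classical \texttt{random} algorithm to pick up extra critical edges, and in \cite{BenedettiLutz2014} only $(1,1,1,1)$ was reached on this complex despite many runs. There is no a priori bound on how many trials of \texttt{random-lex-last} are needed to hit the perfect vector, so the argument is inherently existential and hinges on at least one favorable random vertex-relabeling unblocking the collapsing sequence. Theorem~\ref{thm:exp} even predicts that the success probability tends to zero under further subdivision, which is why it is important to work with the first barycentric subdivision and why the choice of the more effective \texttt{random-lex-last} strategy is essential.
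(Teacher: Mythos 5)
Your proposal is correct and follows essentially the same route as the paper: the authors also establish this theorem purely computationally, by running the \texttt{random-lex-first} and \texttt{random-lex-last} heuristics $10000$ times each on \texttt{triple\_trefoil\_bsd} and observing the output $(1,0,0,1)$ in $7$ and $5$ runs respectively, each such run being by construction a monotone discrete Morse function and hence a certificate that the vector lies in the spectrum. Your added remarks on verifying acyclicity of the matching and on the Morse inequalities forcing optimality are sound but implicit in the paper's one-line justification.
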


We found the discrete Morse vector (1,0,0,1) for the example \texttt{triple\_trefoil\_bsd} 
with both the  \texttt{random-lex-first} and the \texttt{random-lex-last} strategy
in only 7 and 5 out of 10000 runs, respectively,
which provides us an interesting benchmark example for which the optimal discrete Morse vector is hard to find;
see Table~\ref{tbl:discrete_morse_spectra}. 

Table~\ref{tbl:discrete_morse_spectra} displays two further examples,   \texttt{nc\_sphere}  and   \texttt{bing}, 
 for which the optimal vector $(1,0,0,1)$ was found way more often with the \texttt{random-lex-first} and the \texttt{random-lex-last}
 strategies, compared to \texttt{random-discrete-Morse}. Also, for the examples \texttt{poincare} and \texttt{non\_PL}
(and for other examples as well) we see a dramatic simplification of the spectrum when using \texttt{random-lex-last}  (and \texttt{random-lex-first}). 
For the library examples listed in Table~\ref{tbl:lex_rev_lex},
we highlighted (in bold), which of the three random strategies yields the smallest average of critical cells.
In most cases, the \texttt{random-lex-last} strategy scores best --- and even in the case \texttt{triple\_trefoil\_bsd}  
where it did not, it revealed the optimum. The last column of Table~\ref{tbl:lex_rev_lex} lists the best known theoretical 
lower bound for the number of critical cells in a discrete Morse vector. The number is in bold if we can prove 
that it is actually achievable, i.e., there is a discrete Morse function with that many critical cells.

\vfill

\mbox{}

\pagebreak

{\small
\defaultaddspace=.1em

\setlength{\LTleft}{0pt}
\setlength{\LTright}{0pt}
\begin{longtable}{@{}l@{\extracolsep{5mm}}r@{\extracolsep{\fill}}l@{\extracolsep{5mm}}r@{\extracolsep{\fill}}l@{\extracolsep{5mm}}r@{}}
\caption{\protect\parbox[t]{15cm}{Distribution of discrete Morse vectors in 10000 rounds.}}\label{tbl:discrete_morse_spectra}
\\\toprule
 \addlinespace
 \addlinespace
 \addlinespace
 \addlinespace
 \multicolumn{2}{@{}l@{}}{\texttt{random}}      &  \multicolumn{2}{@{}l@{}}{\texttt{random-lex-first}} &  \multicolumn{2}{@{}l@{}}{\texttt{random-lex-last}} \\ \midrule
\endfirsthead
\caption{\protect\parbox[t]{15cm}{Distribution of obtained discrete Morse vectors in 10000 rounds (continued).}}
\\\toprule
 \addlinespace
 \addlinespace
 \addlinespace
 \addlinespace
 \multicolumn{2}{@{}l@{}}{\texttt{random}}      &  \multicolumn{2}{@{}l@{}}{\texttt{random-lex-first}} &  \multicolumn{2}{@{}l@{}}{\texttt{random-lex-last}} \\ \midrule
\endhead
\bottomrule
\endfoot
\addlinespace
 \texttt{poincare}       &       &                     &       &                     &       \\[.5mm]
 \textbf{(1,2,2,1)}:     &  9073 & \textbf{(1,2,2,1)}: &  9992 & \textbf{(1,2,2,1)}: &  9999 \\[-1.1mm]
 $(1,3,3,1)$:            &   864 & $(1,3,3,1)$:        &     8 & $(1,3,3,1)$:        &     1 \\[-1.1mm]
 $(1,4,4,1)$:            &    45 &                     &       &                     &       \\[-1.1mm]
 $(2,4,3,1)$:            &     7 &                     &       &                     &       \\[-1.1mm]
 $(2,3,2,1)$:            &     6 &                     &       &                     &       \\[-1.1mm]
 $(1,5,5,1)$:            &     5 &                     &       &                     &       \\[.5mm]
\addlinespace
 \addlinespace
  \texttt{non\_PL}       &     &                     &       &                     &       \\[.5mm]
   \emph{(1,0,0,2,2,1)}: &  9383 &  \emph{(1,0,0,2,2,1)}: &  9991 &  \emph{(1,0,0,2,2,1)}: &  10000 \\[-1.1mm]
  $(1,0,0,3,3,1)$:        &        441 & $(1,0,0,3,3,1)$:        &         9  \\[-1.1mm]
  $(1,0,1,3,2,1)$:        &        134 & \\[-1.1mm]
  $(1,0,0,4,4,1)$:        &          23 & \\[-1.1mm]
  $(1,0,1,4,3,1)$:        &         12 & \\[-1.1mm]
  $(1,0,2,4,2,1)$:        &           2 & \\[-1.1mm]
  $(1,0,0,5,5,1)$:        &          2 & \\[-1.1mm]
  $(1,0,2,5,3,1)$:        &          1 & \\[-1.1mm]
  $(1,1,2,3,2,1)$:        &          1 & \\[-1.1mm]
  $(1,0,4,6,2,1)$:        &          1 & \\[.5mm]
 \addlinespace
 \addlinespace
\addlinespace
  \texttt{nc\_sphere}    &       &                     &       &                     &       \\[.5mm]
 $(1,1,1,1)$:            &  7902 & $(1,1,1,1)$:        &  7550 & $(1,1,1,1)$:        &  8440 \\[-1.1mm]
 $(1,2,2,1)$:            &  1809 & $(1,2,2,1)$:        &  1660 & $(1,2,2,1)$:        &  1426 \\[-1.1mm]
 $(1,3,3,1)$:            &   234 & \textbf{(1,0,0,1)}: &   720 & \textbf{(1,0,0,1)}: &   110 \\[-1.1mm]
 $(1,4,4,1)$:            &    25 & $(1,3,3,1)$:        &    64 & $(1,3,3,1)$:        &    24 \\[-1.1mm]
 \textbf{(1,0,0,1)}:     &    12 & $(2,3,2,1)$:        &     4 &                     &       \\[-1.1mm]
 $(2,3,2,1)$:            &     9 & $(1,4,4,1)$:        &     2 &                     &       \\[-1.1mm]
 $(1,6,6,1)$:            &     3 &                     &       &                     &       \\[-1.1mm]
 $(2,4,3,1)$:            &     3 &                     &       &                     &       \\[-1.1mm]
 $(2,5,4,1)$:            &     2 &                     &       &                     &       \\[-1.1mm]
 $(1,5,5,1)$:            &     1 &                     &       &                     &       \\
 \addlinespace
\addlinespace
 \addlinespace
  \texttt{bing}          &       &                     &       &                     &       \\[.5mm]
 $(1,1,1,0)$:            &  9764 & $(1,1,1,0)$:        &  9484 & $(1,1,1,0)$:        &  9421 \\[-1.1mm]
 $(1,2,2,0)$:            &   217 & \textbf{(1,0,0,0)}: &   280 & \textbf{(1,0,0,0)}: &   456 \\[-1.1mm]
 \textbf{(1,0,0,0)}:     &     7 & $(1,2,2,0)$:        &   233 & $(1,2,2,0)$:        &   117 \\[-1.1mm]
 $(1,3,3,0)$:            &     6 & $(2,3,2,0)$:        &     2 & $(2,3,2,0)$:        &     4 \\[-1.1mm]
 $(2,3,2,0)$:            &     6 & $(1,3,3,0)$:        &     1 & $(1,3,3,0)$:        &     2 \\[.5mm]
\addlinespace
\addlinespace
 \addlinespace
  \texttt{triple\_trefoil\_bsd}          &     &                     &       &                     &       \\[.5mm]
 $(1,2,2,1)$:              &   4793 &  $(1,2,2,1)$ :  &  5193  &   $(1,2,2,1)$ :  &  4557   \\[-1.1mm]
   $(1,1,1,1)$:             &  3390 &   $(1,3,3,1)$:  &   2531  &  $(1,1,1,1)$:  &   3290  \\[-1.1mm]
   $(1,3,3,1)$:             &  1543 &   $(1,1,1,1)$:  &  1966   &  $(1,3,3,1)$:  &   1790  \\[-1.1mm]
   $(1,4,4,1)$:             &    208 &   $(1,4,4,1)$:  &     278   & $(1,4,4,1)$:  &     305   \\[-1.1mm]
   $(1,5,5,1)$:             &      22 &   $(1,5,5,1)$:  &        19   & $(1,5,5,1)$:  &        31  \\[-1.1mm]
   $(2,3,2,1)$:             &      20 &   \textbf{(1,0,0,1)}:  &          7   & $(2,3,2,1)$:  &         13   \\[-1.1mm]
   $(2,4,3,1)$:             &      17 &   $(2,3,2,1)$:  &          3   &  \textbf{(1,0,0,1)}:  &          5   \\[-1.1mm]
   $(1,6,6,1)$:             &         3 &  $(2,4,3,1)$:  &          3   &  $(2,4,3,1)$:  &          5   \\[-1.1mm]
   $(2,5,4,1)$:             &         3 &                         &               &  $(2,5,4,1)$:  &          2  \\[-1.1mm]
   $(1,8,8,1)$:             &         1 &                         &               &  $(1,6,6,1)$:  &          2  \\[.5mm] 
  \addlinespace
 \addlinespace
 \addlinespace
 \addlinespace
 \addlinespace
\end{longtable}

}

\pagebreak

{\small
\defaultaddspace=.1em

\setlength{\LTleft}{0pt}
\setlength{\LTright}{0pt}
\begin{longtable}{@{}l@{\extracolsep{\fill}}r@{\extracolsep{\fill}}r@{\extracolsep{\fill}}r@{\extracolsep{\fill}}r@{}}
\caption{\protect\parbox[t]{15cm}{Average numbers of critical cells in 10000 runs.}}\label{tbl:lex_rev_lex}
\\[-.7mm]\toprule
 \addlinespace
 \addlinespace
 \addlinespace
 \addlinespace
 Name of example  &    \texttt{random} &  \texttt{r.-lex-first}      & \texttt{r.-lex-last}   & $\ge$     \\[-.7mm] \midrule
\endfirsthead
\caption{\protect\parbox[t]{15cm}{Average numbers of critical cells in 10000 runs (continued).}}
\\[-.7mm]\toprule
 \addlinespace
 \addlinespace
 \addlinespace
 \addlinespace
 Name of example  &   \texttt{random}  &  \texttt{r.-lex-first}     & \texttt{r.-lex-last}  & $\ge$     \\[-.7mm] \midrule
\endhead
\bottomrule
\endfoot
 \addlinespace
 \addlinespace
 \addlinespace
 \addlinespace
  \texttt{d2n12g6}                                                    &  14.0558  & 14.0000   &   \textbf{14.0000} &  \textbf{14} \\[-.5mm]
  \texttt{regular\_2\_21\_23\_1}                                      &  32.1354  &  32.0000  &   \textbf{32.0000}  &  \textbf{32}  \\[-.5mm]
  \texttt{rand2\_n25\_p0.328}                                         &  482.9612  &  \textbf{476.7076}  & 477.7182 & \textbf{476} \\[-.5mm]
  \texttt{trefoil\_arc}                                               &   \textbf{1.0952}  &  \emph{1.6246}  &  \emph{1.2180}  &  \textbf{1}  \\[-.5mm]
  \texttt{trefoil}                                                    &  2.0778  &  \emph{2.2164}  &   \textbf{2.0330}  &   \textbf{2} \\[-.5mm]
  \texttt{double\_trefoil\_arc}                                       &  3.6298  &  \emph{3.7690}  &   \textbf{3.2718}  &  \textbf{3}  \\[-.5mm]
  \texttt{poincare}                                                   &  6.1978  &  6.0016  &  \textbf{6.0002}   &  \textbf{6}  \\[-.5mm]
  \texttt{double\_trefoil}                                            &  3.5356  &  \emph{3.7664}  &   \textbf{3.0778}  &  \textbf{2}  \\[-.5mm]
  \texttt{triple\_trefoil\_arc}                                       &  5.9558  &  5.9376  &  \textbf{5.2700}   &  \textbf{5}  \\[-.5mm]
  \texttt{triple\_trefoil}                                            &  6.0082  &  5.9408  &  \textbf{5.0584}  &  \textbf{4}  \\[-.5mm]
  \texttt{hyperbolic\_dodecahedral\_space}                            & 11.5128   &  10.1582  &  \textbf{10.0906}  &  8  \\[-.5mm]
  \texttt{S\_3\_50\_1033} (random)                                    &  3.2114  &  3.2108   &  \textbf{2.6104}   &  \textbf{2}  \\[-.5mm]
  \texttt{non\_4\_2\_colorable}                                       &  30.6  &  25.3232  &  \textbf{16.8154}  &  \textbf{2}  \\[-.5mm]
  \texttt{Hom\_C5\_K4} ($\mathbb{R}\textbf{P}^3$)                     &   4.0508 &  4.0300  &  \textbf{4.0090}  &  \textbf{4}  \\[-.5mm]
  \texttt{trefoil\_bsd}                                               &  \textbf{2.0202}  &  \emph{2.2388}   &  \emph{2.1052}  &  \textbf{2}  \\[-.5mm]
  \texttt{knot}                                                       &  3.1228  &  3.1262  &  \textbf{3.1034}  &  1  \\[-.5mm]
  \texttt{nc\_sphere}                                                 &  4.4788 &  \textbf{4.2164}  &  4.2728  &  \textbf{2}  \\[-.5mm]
  \texttt{double\_trefoil\_bsd}                                       &  \textbf{3.3426}  & \emph{3.8968}  &  \emph{3.4406}  &  \textbf{2}  \\[-.5mm]
  \texttt{bing}                                                       &  3.0468  &  2.9918  &  \textbf{2.9346}  &  \textbf{1}  \\[-.5mm]
  \texttt{triple\_trefoil\_bsd}                                       &  \textbf{5.7432}  &  \emph{6.2346}  &  \emph{5.8460}   &  \textbf{2}  \\[-.5mm]
  \texttt{Hom\_n9\_655\_compl\_K4} $((S^2\!\times\!S^1)^{\# 13})$  [100 runs]   &  28.84  & 28.46 &  \textbf{28.28} &  \textbf{28}  \\[-.5mm] 
  \texttt{CP2}                                                        &  3.0012  &  \textbf{3.0000}   &  \textbf{3.0000}  &  \textbf{3}  \\[-.5mm]
  \texttt{RP4}                                                        &  5.0492  &  \textbf{5.0000}  &  \textbf{5.0000}  &  \textbf{5}  \\[-.5mm]
  \texttt{K3\_16} (unknown PL type)                                   &  24.8228  &  \textbf{24.0000}  &  \textbf{24.0000}   &  \textbf{24}  \\[-.5mm]
  \texttt{K3\_17} (standard PL type)                                  &  24.8984  &  24.0004  &   \textbf{24.0000}   &  \textbf{24}  \\[-.5mm]
  \texttt{RP4\_K3\_17}                                                &  28.58 &  27.0046  &  \textbf{27.0018}  &   \textbf{27} \\[-.5mm]
  \texttt{RP4\_11S2xS2}                                               &  28.48  &  27.0088  &  \textbf{27.0024}  &   \textbf{27} \\[-.5mm]
  \texttt{Hom\_C6\_compl\_K5\_small} $((S^2\!\times\!S^2)^{\# 29})$   &  63.94  &  60.0284  &  \textbf{60.0066}  &  \textbf{60}  \\[-.5mm]
  \texttt{Hom\_C6\_compl\_K5} $((S^2\!\times\!S^2)^{\# 29})$    [10 runs]     &  83.2  &  65.0  &  \textbf{61.6}  &  \textbf{60}  \\[-.5mm]
  \texttt{SU2\_SO3}                                                   &  4.1354  &  \textbf{4.0000}  &  \textbf{4.0000}   &  \textbf{4}  \\[-.5mm]
  \texttt{non\_PL}                                                    &  6.1328  &  6.0018  &  \textbf{6.0000}   &  2  \\[-.5mm]
  \texttt{RP5\_24}                                                    &  6.1770  &  6.0004  &  \textbf{6.0000}  &  \textbf{6}  \\[-.5mm]
  \texttt{S2xpoincare}                                                &  15.70  &  12.078  &  \textbf{12.028}  &  4  \\[-.5mm]
  \texttt{\_HP2}                                                      &  3.1212  &  \textbf{3.0000}  &   \textbf{3.0000}   &  \textbf{3}  \\[-.5mm]
  \texttt{contractible\_vertex\_homogeneous}  [10 runs]             &  $273.6$    &  $\textbf{17.0}$  &   $\textbf{17.0}$ &   1 \\  

 \addlinespace
 \addlinespace
 \addlinespace
 \addlinespace
 \addlinespace
\end{longtable}

}

\bibliography{./references_random_methods_II}

\end{document}